\DeclareMathOperator*{\dist}{dist}
\newtheorem{theorem}{Theorem}
\newtheorem{lemma}{Lemma}
\newtheorem{corollary}{Corollary}
\theoremstyle{definition}
\newtheorem{definition}{Definition}
\newtheorem{remark}{Remark}
\begin{document}
\title[ Integral operators on analytic Morrey spaces]%
{ Integral operators on analytic Morrey spaces}

\author{Pengtao Li, junming liu \& Zengjian Lou*}

\address{Department of Mathematics, Shantou University, Shantou, Guangdong,
515063, \newline P.~R.~China} \email{ptli@stu.edu.cn}

 \email{08jmliu@stu.edu.cn}

 \email{zjlou@stu.edu.cn}

\begin{abstract}
In this note, we study the boundedness of integral operators $I_{g}$
and $T_{g}$ on analytic Morrey spaces. Furthermore, the norm and
essential norm of those operators are given.
\end{abstract}
\thanks{This work was supported by NNSF of China (Grant No. 11171203 and No. 11201280) and NSF of Guangdong Province
(Grant no. 10151503101000025 and No. S2011010004511).}
 \thanks{*Corresponding author.}
\keywords{ Analytic Morrey space, Volterra type operator, essential
norm} \subjclass[2000]{45P05, 42B35}

\maketitle

\section{Introduction}
 Morrey spaces were
initially introduced by Morrey \cite{MCB} in 1938.  As  useful
tools, Morrey spaces play an important role in the study of harmonic
analysis and partial differential equations. See  Taylor \cite{T},
Olsen \cite{Ol}, Kukavica \cite{IK}, Palagachev and Softova
\cite{PS}, and the reference therein.

In recent decades, in real and complex settings, Morrey spaces have
been studied extensively. For example, in Euclidean spaces
$\mathbb{R}^{n}$, Adams and Xiao studied Morrey spaces by potential
theory and Hausdorff capacity in \cite{AX} and \cite{AX1}.  Duong,
Yan and Xiao \cite{DXY} characterized Morrey spaces by the operators
with heat kernel bounds. The multipliers of Morrey spaces were
studied by Gilles and Rieusset \cite{GR}. In the unit disc
$\mathbb{D}$, the analytic Morrey spaces, $\mathcal{L}^{2,\lambda}$,
were introduced and studied by Wu and Xie in \cite{WX}. Xiao and Xu
\cite{XX} studied the composition operators of
$\mathcal{L}^{2,\lambda}$ spaces. Cascante, F\`{a}brega and Ortega
\cite{CFO} studied the Corona theorem of $\mathcal{L}^{2,\lambda}$.

For analytic functions $g$ in $\mathbb{D}$, the Volterra type
operator $T_{g}$ is defined as
$$T_{g}f(z)=\int_{0}^{z}f(w)g'(w)dw, $$
on the space of analytic functions $f$ in $\mathbb{D}$. The
operator $T_{g}$ was firstly investigated on Hardy spaces by
Pommerenke \cite{PC}. Another integral operator related to $T_{g}$
(denoted by $I_{g}$) is defined by
$$I_{g}f(z)=\int_{0}^{z}f'(w)g(w)dw.$$
The boundedness and compactness of  $I_{g}$ and $T_{g}$ between
spaces of analytic functions were studied by many authors. The
$T_{g}$ on Hardy spaces and Bergman spaces were studied by Aleman
and Cima in \cite{AC},  Aleman and Siskakis in \cite{AS, AS1}.
Siskakis and Zhao \cite{SZ} studied $T_{g}$ on the space $BMOA$.
Xiao \cite{XJ1} considered $I_{g}$ and $T_{g}$ on $Q_{p}$ spaces.
Constantin \cite{CO} studied the boundedness and the compactness of
$T_{g}$ on Fock spaces. In \cite{WZ}, Wu  considered $T_{g}$ from
Hardy spaces to analytic Morrey spaces. The closed ranges of $I_{g}$
and $T_{g}$ are studied by Anderson \cite{AA}.  See Aleman
\cite{ALA} and its references
  for more information on those integral operators.

The essential norm can be seen as a useful tool to study the
operators on function spaces. By the essential norms,
 we can understand better the relation between bounded operators and compact operators.
The essential norms were first introduced by Shapiro \cite{Sh} to
study the composition operators. Recently, the essential norms were
 applied to study the integral operators $I_{g}$ and $T_{g}$, see
Laitila, Miihkinen and Nieminen \cite{JSP}, and Liu, Lou and Xiong
\cite{LLX}. We refer the reader to \cite{Sh},  \cite{JSP} and
\cite{LLX} for further information.

In this paper, we consider the operators
$$I_{g},
T_{g}:\mathcal{L}^{2,\lambda}(\mathbb{D})\rightarrow
\mathcal{L}^{2,\lambda}(\mathbb{D}),\ 0<\lambda<1$$
 and
 $$I_{g},
T_{g}:\ H^{p}(\mathbb{D}) \rightarrow \mathcal{L}^{2,
1-\frac{2}{p}}(\mathbb{D}),\ 2<p\leq \infty.$$ The aim is to study
the boundedness of $I_{g}$ and $T_{g}$, and to estimate the norms
and essential norms of $I_{g}$ and $T_{g}$. We will prove that the
norms of $I_{g}$ and $T_{g}$ is equivalent to $\|g\|_{\infty}$ and
$\|g\|_{BMOA}$, respectively. By these equivalence, we could obtain
the necessary and sufficient conditions of the boundedness of
$I_{g}$ and $T_{g}$. See Theorems 1-4. On essential norms, we prove
that the essential norm of $I_{g}$ is equivalent to
$\|g\|_{\infty}$, and that the essential norm of $T_{g}$ is
equivalent to the distance between $g$ and the space $VMOA$, see
Theorems 5-8. As corollaries, we obtain the sufficient and necessary
conditions of the compactness for $I_{g}$ and $T_{g}$, respectively.
We should point out that these necessary and sufficient conditions
can be also proved without using the essential norms. Compared with
such method, our results provide more information.

 The rest of this paper is organized as follows. In
 Section 2, we state some notations and preliminaries which will be used in
 the sequel. Section 3 is devoted to the study of the boundedness of
 $I_{g}$ and $T_{g}$ and of the norm estimate of those integral operators.
  The essential norms
 of $I_{g}$ and $T_{g}$ are given in Section 4.

{\it Notations}:\ For two functions $F$ and $G$, if there is a
constant $C>0$ dependent only on indexes $p, \lambda, \cdots$ such
that $F\leq C G$, then we say that $F\lesssim G$. Furthermore,
denote that $F\approx G$ (F is comparable with G) whenever
$F\lesssim G\lesssim F$.

\section{Notations and Preliminaries}
 Let $\mathbb{D}$ and  $\partial \mathbb{D}=\{z:|z|=1\}$
 denote respectively the open unit disc and the unit circle in the
complex plane $\mathbb{C}$. Let $H(\mathbb{D})$ be the space of all
analytic functions on $\mathbb{D}$ and $dA(z)=\frac{1}{\pi}dxdy$ the
normalized area Lebesgue measure.

For $0<p<\infty$, the Hardy space $H^{p}(\mathbb{D})$ consists of
all functions $f\in H(\mathbb{D})$ with
$$\|f\|_{H^{p}}^{p}=\sup_{0<r<1}\frac{1}{2\pi}\int_{0}^{2\pi}|f(re^{i\theta})|^{p}d\theta<\infty.$$

 For an arc $I\subset \partial
\mathbb{D}$, let $|I|=\frac{1}{2\pi}\int_{I}|d\zeta|$ be the
normalized length of $I$,
$$f_{I}=\frac{1}{|I|}\int_{I}f(\zeta)\frac{|d\zeta|}{2\pi},\ \ f\in H(\mathbb{D}),$$
and $S(I)$ be the Carleson box based on $I$ with
$$S(I)=\{z\in \mathbb{D}:1-|I|\leq |z|<1~\mbox{and}~\frac{z}{|z|}\in I\}.$$
Clearly, if $I=\partial \mathbb{D}$, then $S(I)=\mathbb{D}$.

\begin{definition}
For $0<p<\infty$, we say that a non-negative measure $\mu$ on
$\mathbb{D}$ is a $p$-Carleson measure if
$$\sup_{I\subset \partial \mathbb{D}}\frac{\mu(S(I))}{|I|^{p}}<\infty.$$
If $p=1$, $p$-Carleson measure is the classical Carleson measure.
\end{definition}

The following theorem due to L. Carleson is a significant result on
Carleson measure (see, for example, \cite{GJB} and \cite{zhu}).

\noindent {\bf Theorem A}. Suppose that $\mu$ is a non-negative
measure on $\mathbb{D}$. Then $\mu$ is a Carleson measure if and
only if the following inequality
$$\int_{\mathbb{D}}|f(z)|^{2}d\mu(z)\lesssim \|f\|_{H^{2}}^{2}=\int_{0}^{2\pi}|f(e^{i\theta})|^{2}d\theta$$
holds for all $f$ in Hardy space $ H^{2}(\mathbb{D})$. Moreover,
\begin{equation}\nonumber
\sup_{\|f\|_{H^{2}}=1} \int_{\mathbb{D}}|f(z)|^{2}d\mu(z)\approx
\sup_{I\subset
\partial \mathbb{D}}\frac{\mu(S(I))}{|I|}.
\end{equation}

Recall that $BMOA$ is the set of $f$ in the Hardy space $
H^{2}(\mathbb{D})$ whose boundary value functions satisfies
$$\|f\|_{*}=\sup_{I\subset \partial \mathbb{D}}\Big( \frac{1}{|I|}\int_{I}|f(e^{i\theta})-f_{I}|^{2}d\theta\Big)^{1/2}<\infty.$$
 The norm of functions $f$ in $BMOA$ can be expressed as $\|f\|=|f(0)|+\|f\|_{*}$.
From \cite{GD}, we know that $\|f\|$ is comparable with the norm
$$\|f\|_{BMOA}=|f(0)|+\sup_{a\in \mathbb{D}}\|f\circ\sigma_{a}-f(a)\|_{H^{2}},$$
where $a\in \mathbb{D}$ and $\sigma_{a}$ is the M\"{o}bius
transformation with
$$\sigma_{a}(z)=\frac{a-z}{1-\overline{a}z},\ \ z\in \mathbb{D}.$$
Furthermore, if
$$\lim_{|I|\rightarrow 0} \frac{1}{|I|}\int_{I}|f(e^{i\theta})-f_{I}|^{2}d\theta=0, $$
we say that $f\in VMOA$, the space of analytic functions with
vanishing mean oscillation, we also know that $f\in VMOA$ if and
only if
$$\lim_{|a|\rightarrow 1}\|f\circ\sigma_{a}-f(a)\|_{H^{2}}=0.$$
For more information on $BMOA$ and $VMOA$, see \cite{GD}.

The following theorem (Theorem 6.5 of \cite{GD} or Theorem 4.1.1 of
\cite{XJ1}) is a Carleson measure characterization of $BMOA$.

\noindent {\bf Theorem B}. Let $f\in H(\mathbb{D})$, then $f\in
BMOA$ if and only if the measure $\mu_{f}$ with
$d\mu_{f}=|f'(z)|^{2}(1-|z|^{2})dA(z)$ is a Carleson measure.
Moreover,
\begin{equation}\nonumber
\|f\|_{BMOA}\approx |f(0)|+\Big(\sup_{I\subset
\partial \mathbb{D}}\frac{\mu_{f}(S(I))}{|I|} \Big)^{1/2}.
\end{equation}

\begin{definition}
Let $0< \lambda\leq 1$. The Morrey space $\mathcal{L}^{2,
\lambda}(\mathbb{D})$ is the set of all $f$ belongs to the Hardy
space $ H^{2}(\mathbb{D})$ such that
$$\sup_{I\subset \partial \mathbb{D}}\Big(\frac{1}{|I|^{\lambda}}
\int_{I}|f(\zeta)-f_{I}|^{2}\frac{|d\zeta|}{2\pi}\Big)^{1/2}<\infty.$$
\end{definition}

Clearly, $\mathcal{L}^{2, 1}(\mathbb{D})=BMOA$. The following lemma
gives some equivalent conditions of
$\mathcal{L}^{2,\lambda}(\mathbb{D})$ (see Theorem 3.21 of
\cite{XJ2} or Theorem 3.1 of \cite{Wulan}).

\begin{lemma}\label{le1}
Suppose that $0<\lambda< 1$ and $f\in H(\mathbb{D})$. Let
$\sigma_{a}(z)=\frac{a-z}{1-\overline{a}z}$. Then the following
statements are equivalent.
\begin{enumerate}
\item[(i)] $f\in \mathcal{L}^{2, \lambda}(\mathbb{D})$;
\item[(ii)] $\sup\limits_{I\subset \partial
\mathbb{D}}\frac{1}{|I|^{\lambda}}\int_{S(I)}|f'(z)|^{2}(1-|z|^{2})dA(z)<\infty$;
\item[(iii)] $\sup\limits_{a\in
\mathbb{D}}(1-|a|^{2})^{1-\lambda}\int_{\mathbb{D}}|f'(z)|^{2}(1-|\sigma_{a}(z)|^{2})dA(z)<\infty$;
\item[(iv)] $\sup\limits_{a\in
\mathbb{D}}(1-|a|^{2})^{1-\lambda}\int_{\mathbb{D}}|f'(z)|^{2}\log
\frac{1}{|\sigma_{a}(z)|}dA(z)<\infty$.
\end{enumerate}
\end{lemma}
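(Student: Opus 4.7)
The plan is to establish the cycle of equivalences (i) $\Leftrightarrow$ (ii) $\Leftrightarrow$ (iii) $\Leftrightarrow$ (iv), using one classical tool at each step.

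First I would prove (i) $\Leftrightarrow$ (ii) via the arcwise analog of Theorem B. For a fixed arc $I$, one has the well-known local comparison
$$\frac{1}{|I|}\int_{I}|f(\zeta)-f_{I}|^{2}\frac{|d\zeta|}{2\pi}\approx \frac{1}{|I|}\int_{S(I)}|f'(z)|^{2}(1-|z|^{2})\,dA(z),$$
which can be extracted from a Green's identity on the Carleson box, or by applying the BMOA identity of Theorem B to a Möbius-adjusted version of $f$ localized near $I$. Dividing both sides by $|I|^{\lambda-1}$ and taking $\sup_{I\subset\partial\mathbb{D}}$ converts (i) into (ii).

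For (ii) $\Leftrightarrow$ (iii) I would exploit the Möbius identity
$$1-|\sigma_{a}(z)|^{2}=\frac{(1-|a|^{2})(1-|z|^{2})}{|1-\overline{a}z|^{2}}.$$
For (iii) $\Rightarrow$ (ii), given $I$ choose $a_{I}=(1-|I|)\zeta_{I}$ where $\zeta_{I}$ is the center of $I$; then $1-|a_{I}|^{2}\approx|I|$ and $|1-\overline{a_{I}}z|\approx |I|$ throughout $S(I)$, which forces $1-|\sigma_{a_{I}}(z)|^{2}\gtrsim (1-|z|^{2})/|I|$, and the desired inequality follows by integrating $|f'|^{2}$ over $S(I)$ and extending to $\mathbb{D}$. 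For (ii) $\Rightarrow$ (iii), fix $a$ and decompose $\mathbb{D}=\bigcup_{k\ge 0}A_{k}$ into dyadic Carleson shells $A_{k}=S(I_{k})\setminus S(I_{k-1})$ around $a/|a|$, where $I_{k}$ has length $2^{k}(1-|a|)$; on $A_{k}$ one has $|1-\overline{a}z|\approx 2^{k}(1-|a|)$, so hypothesis (ii) controls the $k$-th shell contribution by a multiple of $(1-|a|)^{\lambda-1}\,2^{-k(2-\lambda)}$, giving a geometric series summable because $\lambda<1$.

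For (iii) $\Leftrightarrow$ (iv) I would apply the classical Littlewood–Paley identity
$$\int_{\mathbb{D}}|h'(z)|^{2}\log\frac{1}{|z|}\,dA(z)\approx \int_{\mathbb{D}}|h'(z)|^{2}(1-|z|^{2})\,dA(z)\approx \|h\|_{H^{2}}^{2}-|h(0)|^{2}$$
to the composition $h=f\circ\sigma_{a}$, and then change variables $w=\sigma_{a}(z)$. The Möbius-invariant identity $|h'(z)|^{2}\,dA(z)=|f'(w)|^{2}\,dA(w)$ converts the above into the $a$-pointwise comparison
$$\int_{\mathbb{D}}|f'(w)|^{2}\log\frac{1}{|\sigma_{a}(w)|}\,dA(w)\approx \int_{\mathbb{D}}|f'(w)|^{2}\bigl(1-|\sigma_{a}(w)|^{2}\bigr)\,dA(w),$$
with constants independent of $a$; multiplying by $(1-|a|^{2})^{1-\lambda}$ and taking $\sup_{a\in\mathbb{D}}$ yields the last equivalence.

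The hardest step is the dyadic-shell estimate in (ii) $\Rightarrow$ (iii): keeping track of the correct powers on each shell and verifying convergence of the geometric series requires care, and this is exactly where the hypothesis $\lambda<1$ enters (the borderline $\lambda=1$ gives BMOA, which is already handled by Theorem B). The remaining three implications are essentially direct applications of standard identities, with the only subtlety being the choice of test point $a_{I}$ and the Möbius change-of-variables bookkeeping.
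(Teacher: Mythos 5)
First, note that the paper does not prove this lemma at all: it is quoted directly from Theorem 3.21 of \cite{XJ2} and Theorem 3.1 of \cite{Wulan}, so there is no in-paper argument to compare against. Judged on its own terms, your chain (ii) $\Leftrightarrow$ (iii) $\Leftrightarrow$ (iv) is correct and cleanly executed: the choice $a_{I}=(1-|I|)\zeta_{I}$ with $1-|\sigma_{a_I}(z)|^{2}\gtrsim (1-|z|^{2})/|I|$ on $S(I)$ gives (iii) $\Rightarrow$ (ii) (this is exactly the trick the paper itself uses in Theorem 2), the dyadic-shell bound $C\,2^{-k(2-\lambda)}(1-|a|)^{\lambda-1}$ gives (ii) $\Rightarrow$ (iii) with a convergent geometric series, and the Littlewood--Paley comparison for $h=f\circ\sigma_{a}$ followed by the involution change of variables gives (iii) $\Leftrightarrow$ (iv) with constants independent of $a$.

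The genuine gap is in (i) $\Leftrightarrow$ (ii). The ``well-known local comparison''
$$\frac{1}{|I|}\int_{I}|f(\zeta)-f_{I}|^{2}\frac{|d\zeta|}{2\pi}\approx \frac{1}{|I|}\int_{S(I)}|f'(z)|^{2}(1-|z|^{2})\,dA(z)$$
is false arc by arc with uniform constants. For instance, take $f(z)=\bigl(\tfrac{1+z}{2}\bigr)^{N}$ and let $I$ be a short arc centered at $-1$ of length $\eta$: the left side is $O(\eta^{2N})$ while the right side is of order $N^{2}\eta^{2N}$, so the ratio blows up with $N$. Neither of your proposed justifications repairs this: Green's identity on the Carleson box produces uncontrolled boundary terms on the inner and lateral edges of $S(I)$, and Theorem B applied to a ``M\"obius-adjusted localization'' of $f$ is not a construction that yields the single-arc statement. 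What is true, and what the cited sources actually prove, is the equivalence of the two \emph{suprema}; the standard route is to pass through (iv): one direction uses $P_{a_I}(\zeta)\gtrsim 1/|I|$ on $I$ to get $\frac{1}{|I|}\int_{I}|f-f_{I}|^{2}\lesssim \|f\circ\sigma_{a_I}-f(a_I)\|_{H^{2}}^{2}$, and the converse requires a dyadic decomposition of $\partial\mathbb{D}$ into arcs $2^{k}I$ together with a telescoping estimate for $|f_{2^{k}I}-f_{I}|$, whose geometric series converges precisely because $\lambda<1$. So the place where $\lambda<1$ does real work is not only your shell sum in (ii) $\Rightarrow$ (iii) but also this telescoping step, which your write-up omits entirely. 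Everything else in the proposal stands.
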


The norm of functions $f\in \mathcal{L}^{2, \lambda}(\mathbb{D})$
can be defined as follows
$$\|f\|_{\mathcal{L}^{2, \lambda}}=|f(0)|+\sup_{I \subset \partial\mathbb{D}}
\Big(\frac{1}{|I|^{\lambda}}\int_{S(I)}|f'(z)|^{2}(1-|z|^{2})dA(z)\Big)^{1/2}.$$

\begin{remark}
From the lemma above, it is easy to see that for $f\in
\mathcal{L}^{2, \lambda}(\mathbb{D})$,
\begin{equation}\nonumber
\begin{split}\|f\|_{\mathcal{L}^{2, \lambda}}&\approx
|f(0)|+\sup\limits_{a\in
\mathbb{D}}\Big((1-|a|^{2})^{1-\lambda}\int_{\mathbb{D}}|f'(z)|^{2}(1-|\sigma_{a}(z)|^{2})dA(z)\Big)^{1/2}
\\&\approx |f(0)|+\sup\limits_{a\in
\mathbb{D}}\Big((1-|a|^{2})^{1-\lambda}\int_{\mathbb{D}}|f'(z)|^{2}\log
\frac{1}{|\sigma_{a}(z)|}dA(z) \Big)^{1/2}.
\end{split}
\end{equation}
\end{remark}

Now we give a result about the growth rate of functions in
$\mathcal{L}^{2, \lambda}(\mathbb{D})$.
\begin{lemma}\label{le2}
Let $0<\lambda<1$. If $f\in \mathcal{L}^{2, \lambda}(\mathbb{D})$,
then
$$|f(z)|\lesssim \frac{\|f\|_{\mathcal{L}^{2, \lambda}}}{(1-|z|^{2})^{\frac{1-\lambda}{2}}},\ \ z\in \mathbb{D}.$$
\end{lemma}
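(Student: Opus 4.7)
The plan is to derive the claimed growth rate in two steps: first obtain a pointwise bound on $|f'(z)|$ by combining the Morrey norm control on the Carleson box integrals with the subharmonicity of $|f'|^2$, then integrate radially from $0$ to $z$ to transfer the estimate from $f'$ to $f$.

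For the first step, I would invoke characterization (ii) of Lemma~\ref{le1}, which says that
$$\int_{S(I)}|f'(w)|^{2}(1-|w|^{2})\,dA(w)\lesssim |I|^{\lambda}\,\|f\|_{\mathcal{L}^{2,\lambda}}^{2}$$
for every arc $I\subset\partial\mathbb{D}$. Given $z\in\mathbb{D}\setminus\{0\}$, I choose $I_{z}$ to be the arc of length $2(1-|z|)$ centered at $z/|z|$, so that the Euclidean disk $D(z,(1-|z|)/2)$ is contained in the Carleson box $S(I_{z})$. On this disk $(1-|w|^{2})\approx (1-|z|)$, which lets me discard the weight and obtain
$$\int_{D(z,(1-|z|)/2)} |f'(w)|^{2}\,dA(w) \lesssim (1-|z|)^{\lambda-1}\,\|f\|_{\mathcal{L}^{2,\lambda}}^{2}.$$
Since $|f'|^{2}$ is subharmonic, the sub-mean-value inequality then gives
$$|f'(z)|^{2} \lesssim \frac{1}{(1-|z|)^{2}}\int_{D(z,(1-|z|)/2)}|f'|^{2}\,dA \lesssim \frac{\|f\|_{\mathcal{L}^{2,\lambda}}^{2}}{(1-|z|)^{3-\lambda}},$$
so $|f'(z)|\lesssim \|f\|_{\mathcal{L}^{2,\lambda}}\,(1-|z|)^{-(3-\lambda)/2}$.

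For the second step, write $z=|z|e^{i\theta}$ and integrate the pointwise bound along the radial segment:
$$|f(z)-f(0)| \leq \int_{0}^{|z|}|f'(\rho e^{i\theta})|\,d\rho \lesssim \|f\|_{\mathcal{L}^{2,\lambda}}\int_{0}^{|z|}\frac{d\rho}{(1-\rho)^{(3-\lambda)/2}}.$$
Because $0<\lambda<1$ forces $(3-\lambda)/2>1$, the last integral is comparable with $(1-|z|)^{(\lambda-1)/2}$. Adding $|f(0)|\leq\|f\|_{\mathcal{L}^{2,\lambda}}$ then yields the stated bound.

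The step I expect to require the most care is the geometric setup in the first paragraph: one has to pick the arc $I_{z}$ and the Euclidean disk so that the disk sits inside $S(I_{z})$ and the factor $(1-|w|^{2})$ is genuinely comparable with $(1-|z|)$ throughout, otherwise the exponent bookkeeping breaks down. The rest is essentially arithmetic: the condition $\lambda<1$ enters precisely to make the radial integral of $(1-\rho)^{-(3-\lambda)/2}$ divergent in a controlled way, producing exactly the power $(1-|z|)^{(\lambda-1)/2}$ demanded by the lemma.
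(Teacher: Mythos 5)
Your argument is correct, and its skeleton is the same as the paper's: first a pointwise derivative estimate $|f'(z)|\lesssim \|f\|_{\mathcal{L}^{2,\lambda}}(1-|z|)^{-(3-\lambda)/2}$, then radial integration, where $\lambda<1$ makes the integral $\int_0^{|z|}(1-\rho)^{-(3-\lambda)/2}\,d\rho$ comparable to $(1-|z|)^{(\lambda-1)/2}$. The difference lies in how the derivative estimate is produced. The paper starts from characterization (iii) of Lemma~\ref{le1}: it bounds $\|f\|_{\mathcal{L}^{2,\lambda}}^2$ below by the single term at $a=b$, changes variables $z=\sigma_b(w)$ to rewrite it as $(1-|b|^2)^{1-\lambda}\int_{\mathbb{D}}|(f\circ\sigma_b)'(w)|^2(1-|w|^2)\,dA(w)$, and invokes Lemma 4.12 of Zhu to extract $(1-|b|^2)^{3-\lambda}|f'(b)|^2$. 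You instead start from characterization (ii), localize to the Carleson box over an arc of length comparable to $1-|z|$, note that the weight $1-|w|^2$ is constant up to factors on the Euclidean disk $D(z,(1-|z|)/2)\subset S(I_z)$, and apply the sub-mean-value inequality for $|f'|^2$ directly. The two routes are implementations of the same principle (a local $L^2$ average of $f'$ over a hyperbolic ball of fixed radius controls $|f'(z)|$); yours is more elementary and self-contained in that it avoids the conformal change of variables and the external reference, while the paper's is shorter on the page because Zhu's lemma packages the subharmonicity step. One small point you should make explicit: the containment $D(z,(1-|z|)/2)\subset S(I_z)$ as you set it up requires $|z|$ bounded away from $0$ (say $|z|\ge 1/2$); for $|z|<1/2$ take $I=\partial\mathbb{D}$, so $S(I)=\mathbb{D}$ and the same subharmonicity argument gives $|f'(z)|\lesssim\|f\|_{\mathcal{L}^{2,\lambda}}$, which suffices since $1-|z|^2\approx 1$ there. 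This is a routine repair, not a gap in the idea.
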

\begin{proof}
For any $b\in \mathbb{D}$, we have
\begin{equation}\nonumber
\begin{split}
\|f\|_{\mathcal{L}^{2, \lambda}}&\approx|f(0)|+\sup\limits_{a\in
\mathbb{D}}\Big((1-|a|^{2})^{1-\lambda}\int_{\mathbb{D}}|f'(z)|^{2}(1-|\sigma_{a}(z)|^{2})dA(z)\Big)^{1/2}\\&\geq
\Big((1-|b|^{2})^{1-\lambda}\int_{\mathbb{D}}|f'(z)|^{2}(1-|\sigma_{b}(z)|^{2})dA(z)\Big)^{1/2}
\\&=\Big((1-|b|^{2})^{1-\lambda}\int_{\mathbb{D}}|f'(\sigma_{b}(w))|^{2}|\sigma_{b}'(w)|^{2}(1-|w|^{2})dA(w)\Big)^{1/2}
\\&\gtrsim (1-|b|^{2})^{\frac{3-\lambda}{2}}|f'(b)|,
\end{split}
\end{equation}
where we used Lemma 4.12 of \cite{zhu} in the last inequality. Thus
\begin{equation}\nonumber
\begin{split}
|f(b)-f(0)|&=\Big|b\int_{0}^{1}f'(bt)dt \Big|\leq
|b|\int_{0}^{1}|f'(bt)|dt
\\&\lesssim\|f\|_{\mathcal{L}^{2,
\lambda}}|b|\int_{0}^{1}\frac{1}{(1-|b|t)^{\frac{3-\lambda}{2}}}dt
\\&\lesssim\frac{\|f\|_{\mathcal{L}^{2,
\lambda}}}{(1-|b|)^{\frac{1-\lambda}{2}}}.
\end{split}
\end{equation}
Since the point $b$ is arbitrary, we get
$$|f(z)|\lesssim \frac{\|f\|_{\mathcal{L}^{2, \lambda}}}{(1-|z|^{2})^{\frac{1-\lambda}{2}}},\ \ z\in \mathbb{D}.$$
\end{proof}

\section{\bf Boundedness and Norm estimate of $I_{g}$ and $T_{g}$}
In this section,  we prove the boundedness and estimate the norms of
$I_{g}$ and $T_{g}$.
  The following two lemmas will be used through this paper.

\begin{lemma}\label{le3}
(\cite[Lemma 1]{ZR}) Suppose that $s>-1$ and $r, t>0$. If $t<s+2<r$,
then we have
$$\int_{\mathbb{D}}\frac{(1-|z|^{2})^{s}}{|1-\overline{b}z|^{r}|1-\overline{a}z|^{t}}dA(z)\lesssim
\frac{1}{(1-|b|^{2})^{r-s-2}|1-\overline{a}b|^{t}},$$ where $a, b
\in \mathbb{D}$.
\end{lemma}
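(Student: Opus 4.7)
The plan is to reduce this two-kernel estimate to the classical one-kernel Forelli--Rudin bound
\[
\int_{\mathbb{D}} \frac{(1-|w|^{2})^{s}}{|1-\overline{x}w|^{q}}\,dA(w) \lesssim 1, \qquad s > -1,\ q < s+2,\ x \in \mathbb{D},
\]
by performing the M\"obius change of variables $z = \sigma_{b}(w) = (b-w)/(1-\overline{b}w)$, which is an involution of $\mathbb{D}$. The key identities I would invoke are
\[
1 - |\sigma_{b}(w)|^{2} = \frac{(1-|b|^{2})(1-|w|^{2})}{|1-\overline{b}w|^{2}}, \qquad 1 - \overline{b}\sigma_{b}(w) = \frac{1-|b|^{2}}{1-\overline{b}w},
\]
together with $dA(z) = (1-|b|^{2})^{2}|1-\overline{b}w|^{-4}\,dA(w)$ and the routine computation
\[
1 - \overline{a}\sigma_{b}(w) = \frac{(1-\overline{a}b)(1-\overline{c}w)}{1-\overline{b}w}, \qquad c := \sigma_{b}(a) \in \mathbb{D}.
\]
Substituting and collecting powers of the five resulting factors, I expect the integral on the left-hand side of the lemma to transform into
\[
\frac{(1-|b|^{2})^{s+2-r}}{|1-\overline{a}b|^{t}} \int_{\mathbb{D}} \frac{(1-|w|^{2})^{s}\,|1-\overline{b}w|^{r+t-2s-4}}{|1-\overline{c}w|^{t}}\,dA(w),
\]
so the task reduces to bounding the remaining integral $J$ uniformly in $b, c \in \mathbb{D}$.

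To dispose of $J$, I would split according to the sign of $\alpha := r + t - 2s - 4$. If $\alpha \geq 0$, then $|1-\overline{b}w|^{\alpha} \leq 2^{\alpha}$ on $\mathbb{D}$, and $J$ collapses to $\int_{\mathbb{D}} (1-|w|^{2})^{s}|1-\overline{c}w|^{-t}\,dA(w)$, which is $\lesssim 1$ by the one-kernel estimate since $t < s+2$. If $\alpha < 0$, set $\beta := -\alpha = 2s+4-r-t > 0$ and use the elementary pointwise bound
\[
\frac{1}{|1-\overline{b}w|^{\beta}|1-\overline{c}w|^{t}} \leq \frac{1}{|1-\overline{b}w|^{\beta+t}} + \frac{1}{|1-\overline{c}w|^{\beta+t}},
\]
which holds by comparing the two factors (whichever one is smaller dominates one of the two fractions on the right). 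The hypothesis $r > s+2$ forces $\beta + t = 2s+4-r < s+2$, so each of the two resulting pieces is again controlled by the one-kernel estimate, giving $J \lesssim 1$.

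The only real obstacle is bookkeeping in the change of variables: one must track five separate factors carefully to confirm that the prefactor emerging from the substitution has exactly the powers $(1-|b|^{2})^{s+2-r}$ and $|1-\overline{a}b|^{-t}$. Once the reduction $I \mapsto J$ is verified and the case-splitting above yields $J \lesssim 1$, multiplication by the prefactor reproduces the stated bound.
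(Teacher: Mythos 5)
The paper does not prove this lemma at all; it is imported verbatim as Lemma~1 of \cite{ZR}, so there is no internal proof to compare against. Your argument is a correct, self-contained proof. The change of variables is carried out accurately: the identity $1-\overline{a}\sigma_b(w)=(1-\overline{a}b)(1-\overline{c}w)/(1-\overline{b}w)$ with $c=\sigma_b(a)$ is exact (since $b\overline{a}=\overline{a}b$, the coefficient $(\overline{b}-\overline{a})/(1-\overline{a}b)$ is precisely $\overline{\sigma_b(a)}$), and collecting the powers $s$, $-r$, $-t$ and $+2$ of the Jacobian does produce the prefactor $(1-|b|^2)^{s+2-r}|1-\overline{a}b|^{-t}$ times the integral $J$ you display. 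The case analysis on $\alpha=r+t-2s-4$ is also sound: for $\alpha\geq 0$ the factor $|1-\overline{b}w|^{\alpha}$ is bounded by $2^{\alpha}$ and the hypothesis $t<s+2$ finishes via the one-kernel Forelli--Rudin estimate (Lemma~3.10 of \cite{zhu}); for $\alpha<0$ your pointwise bound $|1-\overline{b}w|^{-\beta}|1-\overline{c}w|^{-t}\leq |1-\overline{b}w|^{-\beta-t}+|1-\overline{c}w|^{-\beta-t}$ is valid because both exponents are positive, and $\beta+t=2s+4-r<s+2$ is exactly the hypothesis $r>s+2$, while $\beta+t>t>0$ in this case so the one-kernel estimate applies again. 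Both hypotheses $t<s+2<r$ and $s>-1$ are used and nothing else is needed, so the proof is complete; it is essentially the standard route to such two-kernel estimates and would serve as a legitimate replacement for the external citation.
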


\begin{lemma}\label{le4}
Let $0<\lambda< 1$ and $b\in \mathbb{D}$. Then functions
$f_{b}(z)=(1-|b|^{2})^{\frac{\lambda-1}{2}}(\sigma_{b}(z)-b)$ and
$F_{b}(z)=(1-|b|^{2})(1-\overline{b}z)^{\frac{\lambda-3}{2}}$ belong
to $\mathcal{L}^{2, \lambda}(\mathbb{D})$. Moreover, we have
$\|f_{b}(z)\|_{\mathcal{L}^{2, \lambda}}\lesssim 1$ and
$\|F_{b}(z)\|_{\mathcal{L}^{2, \lambda}}\lesssim 1$, where the
constants implicit are independent of $z$ and $b$.
\end{lemma}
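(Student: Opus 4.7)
The plan is to verify both estimates by applying the M\"obius-invariant characterization (iii) of Lemma \ref{le1}, which gives
$$\|f\|_{\mathcal{L}^{2,\lambda}} \approx |f(0)| + \sup_{a\in\mathbb{D}}\Big((1-|a|^2)^{1-\lambda}\int_\mathbb{D}|f'(z)|^2(1-|\sigma_a(z)|^2)\,dA(z)\Big)^{1/2}.$$
Combined with the identity $1-|\sigma_a(z)|^2 = (1-|a|^2)(1-|z|^2)/|1-\overline{a}z|^2$, this reduces each estimate to bounding an integral of the form $\int_\mathbb{D}(1-|z|^2)/(|1-\overline{b}z|^r|1-\overline{a}z|^2)\,dA(z)$, for which Lemma \ref{le3} is tailor-made.

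For $f_b$, a short computation gives $\sigma_b(z)-b = -(1-|b|^2)z/(1-\overline{b}z)$, hence $f_b(0)=0$ and $|f_b'(z)|^2 = (1-|b|^2)^{\lambda+1}/|1-\overline{b}z|^4$. For $F_b$, I have $|F_b(0)| = 1-|b|^2 \leq 1$ and $|F_b'(z)|^2 \lesssim (1-|b|^2)^2/|1-\overline{b}z|^{5-\lambda}$. In both cases the resulting integrand fits the hypotheses of Lemma \ref{le3} with $(s,r,t)=(1,4,2)$ for $f_b$ or $(1,5-\lambda,2)$ for $F_b$; the condition $t<s+2<r$ holds since $0<\lambda<1$ gives $5-\lambda>3$. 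Applying Lemma \ref{le3} and bookkeeping the prefactors, both cases collapse to the same quantity: one needs to control
$$\sup_{a,b\in\mathbb{D}}\frac{(1-|a|^2)^{2-\lambda}(1-|b|^2)^\lambda}{|1-\overline{a}b|^2}.$$

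The main obstacle, and the only nontrivial step, is to show this supremum is finite. I plan to exploit the elementary inequality $|1-\overline{a}b|\geq 1-|a||b|\geq \tfrac12\bigl((1-|a|^2)+(1-|b|^2)\bigr)$, which follows from $(|a|-|b|)^2\geq 0$. After the substitution $t=(1-|b|^2)/(1-|a|^2)$ this reduces the two-variable sup to the one-variable estimate $\sup_{t\geq 0} t^\lambda/(1+t)^2<\infty$, trivially valid since $0<\lambda<2$. Combining these steps then yields $\|f_b\|_{\mathcal{L}^{2,\lambda}}\lesssim 1$ and $\|F_b\|_{\mathcal{L}^{2,\lambda}}\lesssim 1$ uniformly in $b$, with constants depending only on $\lambda$.
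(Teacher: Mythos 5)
Your proposal is correct and follows essentially the same route as the paper's proof: the characterization (iii) of Lemma \ref{le1}, the explicit derivative computations $|f_b'(z)|^2=(1-|b|^2)^{\lambda+1}/|1-\overline{b}z|^4$ and $|F_b'(z)|^2\lesssim(1-|b|^2)^2/|1-\overline{b}z|^{5-\lambda}$, and Lemma \ref{le3} with exactly the parameters you list, reducing both bounds to the supremum of $(1-|a|^2)^{2-\lambda}(1-|b|^2)^{\lambda}/|1-\overline{a}b|^{2}$. The only difference is that you supply a justification for the finiteness of that supremum (via $|1-\overline{a}b|\geq\tfrac12\bigl((1-|a|^2)+(1-|b|^2)\bigr)$ and the one-variable bound $\sup_{t>0}t^{\lambda}/(1+t)^2<\infty$), a step the paper asserts without proof.
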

\begin{proof}
For $b\in \mathbb{D}$, by Lemma $\ref{le3}$, we have
\begin{equation}\nonumber
\begin{split}
\|f_{b}\|_{\mathcal{L}^{2, \lambda}}^{2}&\approx\sup_{a\in
\mathbb{D}}
(1-|a|^{2})^{1-\lambda}\int_{\mathbb{D}}|f_{b}'(z)|^{2}(1-|\sigma_{a}(z)|^{2})dA(z)\\&
=\sup_{a\in \mathbb{D}}
(1-|a|^{2})^{2-\lambda}(1-|b|^{2})^{\lambda+1}\int_{\mathbb{D}}
\frac{(1-|z|^{2})}{|1-\overline{b}z|^{4}|1-\overline{a}z|^{2}}dA(z)
\\&\lesssim\sup_{a\in
\mathbb{D}}\frac{(1-|a|^{2})^{2-\lambda}(1-|b|^{2})^{\lambda}}{|1-\overline{a}b|^{2}}\lesssim
1
\end{split}
\end{equation}
and

\begin{equation}\nonumber
\begin{split}\|F_{b}\|_{\mathcal{L}^{2, \lambda}}^{2}&\lesssim 1+\sup_{a\in\mathbb{ D}}(1-|a|^{2})^{2-\lambda}(1-|b|^{2})^{2}\int_{\mathbb{D}}
\frac{(1-|z|^{2})}{|1-\overline{b}z|^{5-\lambda}|1-\overline{a}z|^{2}}dA(z)
\\&\lesssim 1+ \sup_{a\in
\mathbb{D}}\frac{(1-|a|^{2})^{2-\lambda}(1-|b|^{2})^{\lambda}}{|1-\overline{a}b|^{2}}\lesssim
1.
\end{split}
\end{equation}
\end{proof}

We first consider the boundedness of $I_{g}$ on $\mathcal{L}^{2,
\lambda}(\mathbb{D})$.

\begin{theorem}\label{th1}
Let $0<\lambda<1$ and $g\in H(\mathbb{D})$. Then $I_{g}$ is bounded
on $\mathcal{L}^{2, \lambda}(\mathbb{D})$ if and only if $g$ belongs
to $H^{\infty}(\mathbb{D})$, the space of bounded analytic functions
on $\mathbb{D}$. Moreover the operator norm satisfies
$$\|I_{g}\|\approx \|g\|_{\infty},$$ where
$\|g\|_{\infty}=\sup_{z\in \mathbb{D}}|g(z)|$.
\end{theorem}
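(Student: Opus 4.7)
The plan is to split the theorem into sufficiency and necessity, both working through the derivative-Carleson characterization in Lemma \ref{le1}(ii). The key observation is that since $(I_g f)'(z) = f'(z)g(z)$ and $(I_g f)(0)=0$, the norm $\|I_g f\|_{\mathcal{L}^{2,\lambda}}$ is controlled exactly by a Carleson-type integral of $|f'|^2 |g|^2 (1-|z|^2)$.

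For the sufficiency direction, assuming $g \in H^\infty$, I would estimate directly
\[
\frac{1}{|I|^\lambda}\int_{S(I)} |(I_g f)'(z)|^2 (1-|z|^2)\, dA(z) \le \|g\|_\infty^2 \cdot \frac{1}{|I|^\lambda}\int_{S(I)} |f'(z)|^2(1-|z|^2)\, dA(z),
\]
take the supremum over arcs $I \subset \partial\mathbb{D}$, and combine with $(I_g f)(0)=0$ to conclude $\|I_g f\|_{\mathcal{L}^{2,\lambda}} \lesssim \|g\|_\infty \|f\|_{\mathcal{L}^{2,\lambda}}$. This gives both boundedness and $\|I_g\| \lesssim \|g\|_\infty$ in one stroke.

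For the necessity direction (and the reverse norm estimate), I plan to use the test family $f_b$ from Lemma \ref{le4}. Computing $\sigma_b'(z) = -(1-|b|^2)/(1-\overline{b}z)^2$, one obtains $|f_b'(z)|^2 = (1-|b|^2)^{\lambda+1}|1-\overline{b}z|^{-4}$. Fix $b \in \mathbb{D}$ with $|b|$ close to $1$ and let $I_b$ be the arc of length $|I_b|=1-|b|$ centered at $b/|b|$. On the Carleson box $S(I_b)$ we have $|1-\overline{b}z| \approx 1-|b|$ and $1-|z|^2 \approx 1-|b|$, so $|f_b'(z)|^2(1-|z|^2) \gtrsim (1-|b|)^{\lambda-2}$. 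Since $\|f_b\|_{\mathcal{L}^{2,\lambda}} \lesssim 1$ and $I_g$ is bounded,
\[
\|I_g\|^2 \gtrsim \frac{1}{|I_b|^\lambda}\int_{S(I_b)} |f_b'(z)|^2 |g(z)|^2(1-|z|^2)\, dA(z) \gtrsim \frac{1}{(1-|b|)^2}\int_{S(I_b)}|g(z)|^2\, dA(z).
\]
The remaining step, which I expect to be the main technical point, is to pass from this averaged bound on a Carleson box to the pointwise value $|g(b)|$. I would do this by subharmonicity of $|g|^2$: choose $r \in (0,1)$ small enough that the Euclidean disc $D(b, r(1-|b|))$ lies inside $S(I_b)$ (up to enlarging $I_b$ by a fixed factor, which changes only constants), and apply the sub-mean-value property
\[
|g(b)|^2 \lesssim \frac{1}{(1-|b|)^2}\int_{D(b,r(1-|b|))}|g(w)|^2\, dA(w) \lesssim \frac{1}{(1-|b|)^2}\int_{S(I_b)}|g(z)|^2\, dA(z) \lesssim \|I_g\|^2.
\]

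Finally, I would deal with $b$ in a fixed compact subset $\{|b|\le 1/2\}$ separately by subharmonicity on a fixed disc, or by absorbing these points into a larger implicit constant via the maximum modulus principle applied to $g$ on $\{|z|\le 3/4\}$, controlled by its values near the boundary. Taking the supremum over $b \in \mathbb{D}$ then yields $\|g\|_\infty \lesssim \|I_g\|$, completing both the necessity and the equivalence $\|I_g\| \approx \|g\|_\infty$.
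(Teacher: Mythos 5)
Your overall strategy coincides with the paper's: the sufficiency estimate is identical (pull $\|g\|_\infty^2$ out of the Carleson-box integral, using $(I_gf)(0)=0$), and the necessity uses the same test family $f_b$ from Lemma \ref{le4} together with a sub-mean-value bound to recover $|g(b)|$. The only real difference is cosmetic: the paper works with the M\"obius-invariant form of the norm (Lemma \ref{le1}(iii)), picks $a=b$ so that $(1-|b|^2)^{1-\lambda}|f_b'|^2=|\sigma_b'|^2$, changes variables $z=\sigma_b(w)$, and invokes Lemma 4.12 of \cite{zhu} to get $\int_{\mathbb{D}}|g\circ\sigma_b(w)|^2(1-|w|^2)\,dA(w)\gtrsim|g(b)|^2$; you instead localize to a Carleson box and apply subharmonicity by hand. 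Both routes are fine and prove the same thing.

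One displayed step in your necessity argument is false as written: the inequality
\begin{equation}\nonumber
\frac{1}{|I_b|^{\lambda}}\int_{S(I_b)}|f_b'(z)|^{2}|g(z)|^{2}(1-|z|^{2})\,dA(z)\ \gtrsim\ \frac{1}{(1-|b|)^{2}}\int_{S(I_b)}|g(z)|^{2}\,dA(z)
\end{equation}
relies on the claim that $1-|z|^{2}\approx 1-|b|$ throughout $S(I_b)$, but the lower bound fails: $S(I_b)$ contains points with $|z|$ arbitrarily close to $1$, where $1-|z|^{2}$ is much smaller than $1-|b|$, so the pointwise bound $|f_b'(z)|^{2}(1-|z|^{2})\gtrsim(1-|b|)^{\lambda-2}$ does not hold on all of $S(I_b)$. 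The repair is immediate and already implicit in your last step: since the integrand is nonnegative, restrict the integral to the sub-disc $D(b,r(1-|b|))$ (contained in $S(cI_b)$ for a fixed $c$) \emph{before} applying pointwise estimates; on that disc both $|1-\overline{b}z|\approx 1-|b|$ and $1-|z|^{2}\approx 1-|b|$ do hold, and the sub-mean-value property of $|g|^{2}$ then gives $|g(b)|^{2}\lesssim\|I_g\|^{2}$ directly. With that reordering the argument is complete; the separate treatment of $|b|\le 1/2$ is unnecessary since the same estimate works there with radius $r(1-|b|)\geq r/2$.
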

\begin{proof}
Let $g\in H^{\infty}(\mathbb{D})$. For any $f\in \mathcal{L}^{2,
\lambda}(\mathbb{D})$, we have
\begin{equation}\nonumber
\begin{split}
\|I_{g}f\|_{\mathcal{L}^{2, \lambda}}&=\sup_{I \subset
\partial \mathbb{D}}
\Big(\frac{1}{|I|^{\lambda}}\int_{S(I)}|f'(z)|^{2}|g(z)|^{2}(1-|z|^{2})dA(z)\Big)^{1/2}
\\&\leq \|f\|_{\mathcal{L}^{2,
\lambda}}\sup_{z\in \mathbb{D}}|g(z)|.
\end{split}
\end{equation}
This leads the boundedness of $I_{g}$ and $\|I_{g}\|\leq
\|g\|_{\infty}$.

On the other hand, if $I_{g}$ is bounded on $\mathcal{L}^{2,
\lambda}(\mathbb{D})$. For any $b\in \mathbb{D}$, suppose that
$f_{b}$ is defined as in Lemma $\ref{le4}$, then
\begin{equation}\nonumber
\begin{split}
\|I_{g}\|&\gtrsim \|I_{g}f_{b}\|_{\mathcal{L}^{2,
\lambda}}\\&\approx \sup_{a\in
\mathbb{D}}\Big((1-|a|^{2})^{1-\lambda}\int_{\mathbb{D}}|f_{b}'(z)|^{2}|g(z)|^{2}(1-|\sigma_{a}(z)|^{2})dA(z)
\Big)^{1/2}\\&\geq\Big(\int_{\mathbb{D}}
|\sigma_{b}'(z)|^{2}|g(z)|^{2}(1-|\sigma_{b}(z)|^{2})dA(z)\Big)^{1/2}
\\&=\Big(\int_{\mathbb{D}}
|g(\sigma_{b}(w))|^{2}(1-|w|^{2})dA(w)\Big)^{1/2}
\\&\gtrsim |g(b)|,
\end{split}
\end{equation}
where we used Lemma 4.12 of \cite{zhu} again in the last inequality.
Since  $b$ is arbitrary, we have $\|I_{g}\|\gtrsim \|g\|_{\infty}$.
Theorem $\ref{th1}$ is proved.
\end{proof}

As a main result of this section, we give the boundedness of $T_{g}$
on $\mathcal{L}^{2, \lambda}(\mathbb{D})$ in the next theorem.

\begin{theorem}\label{th2}
Suppose that $0<\lambda<1$ and $g\in H(\mathbb{D})$. Then $T_{g}$ is
bounded on $\mathcal{L}^{2, \lambda}(\mathbb{D})$ if and only if
$g\in BMOA$. Moreover, $\|T_{g}\|\approx \|g\|_{BMOA}$.
\end{theorem}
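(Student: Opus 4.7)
The plan is to prove both directions through the M\"{o}bius-invariant form of the norm,
\begin{equation*}
\|T_{g}f\|_{\mathcal{L}^{2,\lambda}}^{2}\approx\sup_{a\in\mathbb{D}}(1-|a|^{2})^{1-\lambda}\int_{\mathbb{D}}|f(z)|^{2}|g'(z)|^{2}(1-|\sigma_{a}(z)|^{2})dA(z),
\end{equation*}
which follows from Lemma \ref{le1}(iii) applied to $(T_{g}f)'(z)=f(z)g'(z)$ together with $(T_{g}f)(0)=0$. Both directions will reduce to understanding this family of weighted area integrals.

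For the sufficiency, assume $g\in BMOA$ and split $f(z)=(f(z)-f(a))+f(a)$ inside the integrand. The constant piece is handled by Lemma \ref{le2}, which gives $|f(a)|^{2}\lesssim\|f\|_{\mathcal{L}^{2,\lambda}}^{2}(1-|a|^{2})^{\lambda-1}$, combined with the standard BMOA identity
\begin{equation*}
\int_{\mathbb{D}}|g'(z)|^{2}(1-|\sigma_{a}(z)|^{2})dA(z)\approx\|g\circ\sigma_{a}-g(a)\|_{H^{2}}^{2}\lesssim\|g\|_{BMOA}^{2},
\end{equation*}
so the weights $(1-|a|^{2})^{\pm(1-\lambda)}$ cancel. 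For the $|f(z)-f(a)|^{2}$ piece, I would apply the change of variable $z=\sigma_{a}(w)$ to obtain
\begin{equation*}
\int_{\mathbb{D}}|(f\circ\sigma_{a})(w)-f(a)|^{2}|(g\circ\sigma_{a})'(w)|^{2}(1-|w|^{2})dA(w),
\end{equation*}
then invoke Theorems A and B (using that $g\circ\sigma_{a}\in BMOA$ with equivalent norm by M\"{o}bius invariance) to cap this by $\|(f\circ\sigma_{a})-f(a)\|_{H^{2}}^{2}\|g\|_{BMOA}^{2}$. The Littlewood-Paley identity and Lemma \ref{le1}(iii) then convert $\|(f\circ\sigma_{a})-f(a)\|_{H^{2}}^{2}$ back into $\|f\|_{\mathcal{L}^{2,\lambda}}^{2}(1-|a|^{2})^{\lambda-1}$, once more canceling the outer weight.

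For the necessity, I would test $T_{g}$ on the family $F_{b}(z)=(1-|b|^{2})(1-\overline{b}z)^{(\lambda-3)/2}$ from Lemma \ref{le4}, which satisfies $\|F_{b}\|_{\mathcal{L}^{2,\lambda}}\lesssim 1$. Given an arc $I\subset\partial\mathbb{D}$ with center $e^{i\theta_{I}}$, choosing $b=(1-|I|)e^{i\theta_{I}}$ yields $|1-\overline{b}z|\approx|I|$ and hence $|F_{b}(z)|^{2}\approx|I|^{\lambda-1}$ uniformly on $S(I)$, so that
\begin{equation*}
\|T_{g}\|^{2}\gtrsim\|T_{g}F_{b}\|_{\mathcal{L}^{2,\lambda}}^{2}\geq\frac{1}{|I|^{\lambda}}\int_{S(I)}|F_{b}(z)|^{2}|g'(z)|^{2}(1-|z|^{2})dA(z)\gtrsim\frac{1}{|I|}\int_{S(I)}|g'(z)|^{2}(1-|z|^{2})dA(z).
\end{equation*}
Taking the supremum over $I$ and invoking Theorem B gives $g\in BMOA$ with $\|g\|_{BMOA}\lesssim\|T_{g}\|$. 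I expect the main obstacle to lie in the sufficiency: the careful bookkeeping of the $(1-|a|^{2})^{1-\lambda}$ weights across both split pieces demands that each contribution produces exactly the inverse weight needed for cancellation, which in turn relies on exploiting both the M\"{o}bius-invariant characterization of the Morrey norm (Lemma \ref{le1}(iii)) and the M\"{o}bius invariance of the BMOA seminorm in unison.
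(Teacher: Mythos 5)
Your proposal is correct and follows essentially the same route as the paper: the same splitting of $f$ into a constant piece controlled by Lemma~\ref{le2} plus a difference piece handled via Theorems A and B and the Littlewood--Paley identity, and the same test functions $F_{b}$ from Lemma~\ref{le4} for the necessity. The only cosmetic difference is that you carry out the sufficiency entirely in the M\"{o}bius-invariant form of the norm (splitting at the parameter $a$), whereas the paper works on Carleson boxes $S(I)$ with $b=(1-|I|)\zeta$ and passes to the invariant form via $1-|\sigma_{b}(z)|^{2}\gtrsim (1-|z|^{2})/|I|$ on $S(I)$.
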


\begin{proof}
Suppose  $g\in BMOA$. For $f\in  \mathcal{L}^{2,
\lambda}(\mathbb{D})$ and  any arc $I\subset
\partial{D}$, let $\zeta\in \partial \mathbb{D}$ be the center of arc $I$ and
  $b=(1-|I|)\zeta\in \mathbb{D}$. We have
\begin{equation}\nonumber
\begin{split}
\frac{1}{|I|^{\lambda}}&\int_{S(I)}|(T_{g}f)'(z)|^{2}(1-|z|^{2})dA(z)\\&=
\frac{1}{|I|^{\lambda}}\int_{S(I)}|f(z)|^{2}|g'(z)|^{2}(1-|z|^{2})dA(z)\\&\leq
\frac{2}{|I|^{\lambda}}\int_{S(I)}|f(b)|^{2}|g'(z)|^{2}(1-|z|^{2})dA(z)\\&\qquad+
\frac{2}{|I|^{\lambda}}\int_{S(I)}|f(z)-f(b)|^{2}|g'(z)|^{2}(1-|z|^{2})dA(z)
\\&\triangleq I_{1}+I_{2}.
\end{split}
\end{equation}

We first estimate the term $I_{1}$. By Lemma $\ref{le2}$, we get
$$|f(b)|\lesssim  \frac{\|f\|_{\mathcal{L}^{2, \lambda}}}{(1-|b|^{2})^{\frac{1-\lambda}{2}}}
\lesssim\frac{\|f\|_{\mathcal{L}^{2,
\lambda}}}{|I|^{\frac{1-\lambda}{2}}}.$$ It follows from Theorem B
that
$$I_{1}=\frac{2}{|I|^{\lambda}}\int_{S(I)}|f(b)|^{2}|g'(z)|^{2}(1-|z|^{2})dA(z)\lesssim \|g\|_{BMOA}^{2}\|f\|_{\mathcal{L}^{2,
\lambda}}^{2}.$$

 Now we estimate the term $I_{2}$. Since $1-|\sigma_{b}(z)|^{2}\gtrsim
 \frac{1-|z|^{2}}{|I|}$ for all $z\in S(I)$, then
 \begin{equation}\nonumber
 \begin{split}
 I_{2}&=\frac{2}{|I|^{\lambda}}\int_{S(I)}|f(z)-f(b)|^{2}|g'(z)|^{2}(1-|z|^{2})dA(z)\\&\lesssim
 |I|^{1-\lambda}\int_{S(I)}|f(z)-f(b)|^{2}|g'(z)|^{2}(1-|\sigma_{b}(z)|^{2})dA(z)
\\&\lesssim |I|^{1-\lambda}\int_{\mathbb{D}}|f\circ\sigma_{b}(w)-f(b)|^{2}|(g\circ\sigma_{b})'(w)|^{2}(1-|w|^{2})dA(w)
\\&\lesssim
(1-|b|^{2})^{1-\lambda}\int_{\mathbb{D}}|f\circ\sigma_{b}(w)-f(b)|^{2}|(g\circ\sigma_{b})'(w)|^{2}(1-|w|^{2})dA(w).
 \end{split}
 \end{equation}
Since $g\in BMOA$, then $g\circ\sigma_{b}\in BMOA$ and
$|(g\circ\sigma_{b})'(w)|^{2}(1-|w|^{2})dA(w)$ is a Carleson measure
by Theorem B. Note that $f\in \mathcal{L}^{2, \lambda}(\mathbb{D})
\subset H^{2}(\mathbb{D})$, then $f\circ\sigma_{b}-f(b)\in
H^{2}(\mathbb{D})$. Combining this with Theorem A yields
\begin{equation}\nonumber
\begin{split}
 I_{2}&\lesssim
 (1-|b|^{2})^{1-\lambda}\|g\circ\sigma_{b}\|_{BMOA}^{2}\int_{0}^{2\pi}|f\circ\sigma_{b}(e^{i\theta})-f(b)|^{2}d\theta
 \\&\lesssim
 (1-|b|^{2})^{1-\lambda}\|g\|_{BMOA}^{2}\int_{\mathbb{D}}|f'(z)|^{2}\log\frac{1}{|\sigma_{b}(z)|}dA(z)
 \\&\lesssim\|g\|_{BMOA}^{2}\|f\|_{\mathcal{L}^{2, \lambda}}^{2},
\end{split}
\end{equation}
where we used Littlewood-Paley identity in the second inequality
(see, for example, \cite{GJB} or \cite{zhu}). Thus,
$$\|T_{g}f\|_{\mathcal{L}^{2, \lambda}}^{2}\lesssim I_{1}+I_{2}\lesssim\|g\|_{BMOA}^{2}\|f\|_{\mathcal{L}^{2, \lambda}}^{2}.$$
That is $\|T_{g}\|\lesssim \|g\|_{BMOA}$.

On the other hand, suppose that $T_{g}$ is bounded on
$\mathcal{L}^{2, \lambda}(\mathbb{D})$. For any $I\subset \partial
\mathbb{D}$, let $b=(1-|I|)\zeta\in \mathbb{D}$, where $\zeta$ is
the center of $I$. Then
$$(1-|b|^{2})\approx|1-\overline{b}z|\approx |I|,\ \ z\in S(I).$$
Using the test function $F_{b}$ as in Lemma 4, we get
\begin{equation}\nonumber
\begin{split}
\frac{1}{|I|}&\int_{S(I)}|g'(z)|^{2}(1-|z|^{2})dA(z)
\\&\lesssim
\frac{1}{|I|^{\lambda}}\int_{S(I)}|F_{b}(z)|^{2}|g'(z)|^{2}(1-|z|^{2})dA(z)
\\&=\frac{1}{|I|^{\lambda}}\int_{S(I)}|(T_{g}F_{b})'(z)|^{2}(1-|z|^{2})dA(z)
\\&\leq \|T_{g}F_{b}\|_{\mathcal{L}^{2, \lambda}}^{2}
\\&\leq \|T_{g}\|^{2}\|F_{b}\|_{\mathcal{L}^{2, \lambda}}^{2}
\\&\lesssim \|T_{g}\|^{2}.
\end{split}
\end{equation}
Since $I$ is arbitrary, we obtain $g\in BMOA$ and
$\|g\|_{BMOA}\lesssim \|T_{g}\|$.
 The proof is completed.
\end{proof}
\begin{remark}
Theorem $\ref{th1}$ holds for $\lambda=1$ (see \cite{XJ1}). But
Theorem $\ref{th2}$ is not ture for $\lambda=1$. In fact, Siskakis
and Zhao \cite{SZ} proved that $T_{g}$ is bounded on $BMOA$ if and
only if $g$ belongs to logarithmic $BMOA$ space.
\end{remark}

For $g\in H(\mathbb{D})$, the multiplication operator  $M_{g}$ is
defined by $M_{g}f(z)=f(z)g(z)$. It is easy to see that $M_{g}$ is
related with $I_{g}$ and $T_{g}$ by
$$M_{g}f(z)=f(0)g(0)+I_{g}f(z)+T_{g}f(z).$$

\begin{corollary}
Let $0<\lambda<1$ and $g\in H(\mathbb{D})$. Then the multiplication
operator $M_{g}$ is bounded on $\mathcal{L}^{2,
\lambda}(\mathbb{D})$ if and only if $g\in H^{\infty}(\mathbb{D})$.
\end{corollary}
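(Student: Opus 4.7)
The plan is to reduce this corollary to the two preceding theorems via the decomposition
$$M_g f = f(0)g(0) + I_g f + T_g f,$$
which follows from the product rule (since $(fg)' = f'g + fg'$ and integrating from $0$ to $z$ gives $f(z)g(z) - f(0)g(0) = I_g f(z) + T_g f(z)$).

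For sufficiency, suppose $g \in H^\infty(\mathbb{D})$. Since $H^\infty \subset BMOA$ (a classical inclusion: boundedness of $g$ on $\mathbb{D}$ implies $\|g\|_* \lesssim \|g\|_\infty$), Theorem \ref{th1} gives boundedness of $I_g$ with $\|I_g\| \approx \|g\|_\infty$, and Theorem \ref{th2} gives boundedness of $T_g$ with $\|T_g\| \approx \|g\|_{BMOA} \lesssim \|g\|_\infty$. The constant term $f(0)g(0)$ defines a bounded functional, so the decomposition shows $M_g$ is bounded with $\|M_g\| \lesssim \|g\|_\infty$.

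For necessity, I would use the test function $F_b(z) = (1-|b|^2)(1-\overline{b}z)^{(\lambda-3)/2}$ from Lemma \ref{le4}, which satisfies $\|F_b\|_{\mathcal{L}^{2,\lambda}} \lesssim 1$ uniformly in $b \in \mathbb{D}$. If $M_g$ is bounded, then
$$\|gF_b\|_{\mathcal{L}^{2,\lambda}} = \|M_g F_b\|_{\mathcal{L}^{2,\lambda}} \lesssim \|M_g\|.$$
Applying Lemma \ref{le2} to the function $gF_b \in \mathcal{L}^{2,\lambda}(\mathbb{D})$ at the point $z = b$ yields
$$|g(b)||F_b(b)| \lesssim \frac{\|gF_b\|_{\mathcal{L}^{2,\lambda}}}{(1-|b|^2)^{(1-\lambda)/2}} \lesssim \frac{\|M_g\|}{(1-|b|^2)^{(1-\lambda)/2}}.$$
A direct computation gives $|F_b(b)| = (1-|b|^2)^{-(1-\lambda)/2}$, so the two factors cancel and we obtain $|g(b)| \lesssim \|M_g\|$. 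Since $b \in \mathbb{D}$ is arbitrary, $g \in H^\infty(\mathbb{D})$.

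The whole argument is essentially routine once Theorems \ref{th1} and \ref{th2} are in hand; the only genuine choice is the test function, and the beauty of $F_b$ is that it saturates the growth bound of Lemma \ref{le2} at $z = b$, so multiplying by $g$ and re-applying Lemma \ref{le2} forces pointwise control on $|g(b)|$. No single step poses a real obstacle — the corollary is mainly a bookkeeping consequence of the identity $M_g = f(0)g(0) + I_g + T_g$ together with the inclusion $H^\infty \subset BMOA$.
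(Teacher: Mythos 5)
Your proposal is correct and follows essentially the same route as the paper: sufficiency via the decomposition $M_g f = f(0)g(0) + I_g f + T_g f$ together with Theorems \ref{th1} and \ref{th2}, and necessity by applying the growth estimate of Lemma \ref{le2} to $M_g F_b$ with the test function $F_b$ of Lemma \ref{le4} and evaluating at $z=b$. The computation $|F_b(b)| = (1-|b|^2)^{-(1-\lambda)/2}$ and the resulting cancellation are exactly the paper's argument.
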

\begin{proof}
If $g\in H^{\infty}$, then $I_{g}$ and $ T_{g}$ are both bounded on
$\mathcal{L}^{2, \lambda}(\mathbb{D})$ by Theorems $\ref{th1}$ and
$\ref{th2}$. So $M_{g}$ is bounded on $\mathcal{L}^{2,
\lambda}(\mathbb{D})$.

If $M_{g}$ is bounded on $L^{2,\lambda}(\mathbb{D})$, consider the
function $F_{b}(z)$ in Lemma $\ref{le4}$. Applying Lemma
$\ref{le2}$, we have
\begin{equation}\nonumber
\begin{split}
\Big|\frac{1-|b|^{2}}{(1-\overline{b}z)^{\frac{3-\lambda}{2}}}g(z)\Big|&\lesssim
\|M_{g}F_{b}\|_{\mathcal{L}^{2,
\lambda}}\frac{1}{(1-|z|^{2})^{\frac{1-\lambda}{2}}}\\&\lesssim\|M_{g}\|\frac{1}{(1-|z|^{2})^{\frac{1-\lambda}{2}}}.
\end{split}
\end{equation}
Taking $z=b$ in the inequality above yields $|g(b)|\lesssim
\|M_{g}\|$ and then $g\in H^{\infty}(\mathbb{D})$ by the
arbitrariness of $b$.
\end{proof}

 We now characterize the boundedness of $I_{g}:
H^{p}(\mathbb{D})\rightarrow
\mathcal{L}^{2,1-\frac{2}{p}}(\mathbb{D}) (2<p\leq \infty)$.

\begin{theorem}\label{th31}
Let $2<p\leq \infty$ and $g\in H(\mathbb{D})$. Then $I_{g}:
H^{p}(\mathbb{D})\rightarrow
\mathcal{L}^{2,1-\frac{2}{p}}(\mathbb{D})$ is bounded if and only if
$g\in H^{\infty}(\mathbb{D})$. Moreover $\|I_{g}\|\approx
\|g\|_{\infty}$.
\end{theorem}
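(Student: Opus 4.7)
The plan is to mirror the structure of Theorem~\ref{th1}. For sufficiency I would exploit the identity $(I_g f)'(z)=f'(z)g(z)$ together with the embedding $H^p\hookrightarrow\mathcal{L}^{2,1-2/p}$; for necessity I would select an $H^p$-normalized test function $f_b$ whose derivative, after an involutive change of variable, lets one extract $|g(b)|$ via the subharmonicity of $|g|^2$.

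For sufficiency, assume $g\in H^\infty$. The pointwise estimate $|f'g|^2\le\|g\|_\infty^2|f'|^2$ combined with characterization (ii) of Lemma~\ref{le1} gives
$$\|I_g f\|_{\mathcal{L}^{2,1-2/p}}^2 \leq \|g\|_\infty^2 \sup_{I\subset\partial\mathbb{D}} \frac{1}{|I|^{1-2/p}}\int_{S(I)}|f'(z)|^2(1-|z|^2)\,dA(z) \lesssim \|g\|_\infty^2\|f\|_{\mathcal{L}^{2,1-2/p}}^2.$$
The remaining embedding $\|f\|_{\mathcal{L}^{2,1-2/p}}\lesssim\|f\|_{H^p}$ is elementary: for $p<\infty$ one applies H\"older's inequality to the oscillation integral $\int_I|f-f_I|^2|d\zeta|$ to extract the factor $|I|^{1-2/p}$, while for $p=\infty$ one invokes the classical inclusion $H^\infty\subset BMOA=\mathcal{L}^{2,1}$.

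For necessity I would take $f_b(z)=(1-|b|^2)^{-1/p}(\sigma_b(z)-b)$, a direct analog of the test function from Theorem~\ref{th1}. Using the explicit identity $\sigma_b(e^{i\theta})-b=-(1-|b|^2)e^{i\theta}/(1-\overline{b}e^{i\theta})$ one finds $\|\sigma_b-b\|_{H^p}^p\approx 1-|b|^2$, and hence $\|f_b\|_{H^p}\approx 1$ uniformly in $b\in\mathbb{D}$. Now $f_b'(z)=(1-|b|^2)^{-1/p}\sigma_b'(z)$, so applying characterization (iii) of Lemma~\ref{le1} with $a=b$ followed by the involutive change of variables $z=\sigma_b(w)$ (which gives $|\sigma_b'(\sigma_b(w))\sigma_b'(w)|^2=1$) yields
$$\|I_g f_b\|_{\mathcal{L}^{2,1-2/p}}^2 \gtrsim (1-|b|^2)^{2/p}(1-|b|^2)^{-2/p}\int_\mathbb{D}|g(\sigma_b(w))|^2(1-|w|^2)\,dA(w)\gtrsim|g(b)|^2,$$
the last step being the sub-mean-value property of $|g\circ\sigma_b|^2$ (Lemma 4.12 of \cite{zhu}). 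Since $\|I_g f_b\|\le\|I_g\|\cdot\|f_b\|_{H^p}\lesssim\|I_g\|$, taking the supremum over $b$ delivers $\|g\|_\infty\lesssim\|I_g\|$. The endpoint $p=\infty$ is handled by the same test function $\sigma_b(z)-b$, using the Carleson-measure characterization of $BMOA$ (Theorem B) in place of Lemma~\ref{le1}; I expect this to be the main bookkeeping obstacle, since the Lemma is stated only for $0<\lambda<1$, but the argument is parallel: restricting the supremum in the Morrey norm to the Carleson box $S(I)$ centered at $b/|b|$ with $|I|\approx 1-|b|^2$ and applying the sub-mean-value property of $|g|^2$ on a disc around $b$ contained in $S(I)$ produces the same lower bound $|g(b)|^2\lesssim\|I_g\|^2$.
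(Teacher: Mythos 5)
Your proposal is correct, and the necessity half is essentially the paper's own argument: your test function $f_{b}(z)=(1-|b|^{2})^{-1/p}(\sigma_{b}(z)-b)$ has (up to sign) the same derivative $(1-|b|^{2})^{1-1/p}(1-\overline{b}z)^{-2}$ as the paper's test function $(1-|b|^{2})^{1-1/p}/\bigl(\overline{b}(1-\overline{b}z)\bigr)$ (they differ by an additive constant, which $I_{g}$ does not see), and the ensuing steps --- choosing $a=b$ in the M\"{o}bius-invariant form of the norm, the change of variable $z=\sigma_{b}(w)$, and Lemma 4.12 of \cite{zhu} --- are identical; your version even avoids the paper's restriction $|b|\geq 1/2$, which is needed there only because of the $\overline{b}$ in the denominator. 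The sufficiency half is where you genuinely diverge. The paper bounds $\|I_{g}f\|_{\mathcal{L}^{2,1-2/p}}$ directly: it replaces $1-|\sigma_{a}|^{2}$ by $\log\frac{1}{|\sigma_{a}|}$, applies the Littlewood--Paley identity to get $(1-|a|^{2})^{1/p}\|f\circ\sigma_{a}-f(a)\|_{H^{2}}$, and then uses $\|f\circ\sigma_{a}\|_{H^{p}}\leq\bigl(\tfrac{1+|a|}{1-|a|}\bigr)^{1/p}\|f\|_{H^{p}}$ from \cite{CM}. You instead factor $I_{g}$ through the embedding $H^{p}\hookrightarrow\mathcal{L}^{2,1-2/p}$ and reuse the pointwise multiplier estimate $|(I_{g}f)'|=|f'g|\leq\|g\|_{\infty}|f'|$ exactly as in Theorem \ref{th1}; the embedding itself is a one-line H\"{o}lder argument on $\int_{I}|f-f_{I}|^{2}\,|d\zeta|$ (plus Lemma \ref{le1}(i)$\Leftrightarrow$(ii) to return to the Carleson-box form of the norm, and $H^{\infty}\subset BMOA$ at the endpoint), and it is valid. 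Your route is arguably cleaner and makes transparent why the exponent $1-\tfrac{2}{p}$ is the natural target, at the cost of invoking the norm equivalence in Lemma \ref{le1} once more; the paper's route is self-contained at the level of $H^{p}$ and avoids the embedding entirely. Both give $\|I_{g}\|\lesssim\|g\|_{\infty}$, so the two halves together establish the theorem.
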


\begin{proof}
Suppose $I_{g}: H^{p}(\mathbb{D})\rightarrow
\mathcal{L}^{2,1-\frac{2}{p}}(\mathbb{D})$ is bounded. Consider the
test function
$$f(z)=\frac{(1-|b|^{2})^{1-\frac{1}{p}}}{\overline{b}(1-\overline{b}z)}, \ \frac{1}{2}\leq |b|<1, \ z\in \mathbb{D}.$$
Applying the well-known inequality,
$$\int_{0}^{2\pi}\frac{1}{|1-ze^{i\theta}|^{1+t}}d\theta\lesssim \frac{1}{(1-|z|^{2})^{t}},\ \ \  z\in \mathbb{D}, t>0,$$
we have $f\in H^{p}(\mathbb{D})$ with $\|f\|_{H^{p}}\lesssim 1$. So,
\begin{equation}\nonumber
\begin{split}
\|I_{g}\|&\gtrsim \|I_{g}f\|_{\mathcal{L}^{2,
1-\frac{2}{p}}}\\&\approx \sup_{a\in
\mathbb{D}}\Big((1-|a|^{2})^{\frac{2}{p}}\int_{\mathbb{D}}|f'(z)|^{2}|g(z)|^{2}(1-|\sigma_{a}(z)|^{2})dA(z)
\Big)^{1/2}\\&\geq\Big(\int_{\mathbb{D}}
|\sigma_{b}'(z)|^{2}|g(z)|^{2}(1-|\sigma_{b}(z)|^{2})dA(z)\Big)^{1/2}
\\&=\Big(\int_{\mathbb{D}}
|g(\sigma_{b}(w))|^{2}(1-|w|^{2})dA(w)\Big)^{1/2}
\\&\gtrsim |g(b)|,
\end{split}
\end{equation}
where we used Lemma 4.12 of \cite{zhu} again in the last inequality.
Since $\frac{1}{2}\leq |b|<1 $, we have
$$\|g\|_{\infty}=\sup_{\frac{1}{2}\leq |z|<1}|g(z)|\lesssim \|I_{g}\|.$$

On the other hand, suppose $g\in H^{\infty}(\mathbb{D})$, then
applying Littlewood-Paley identity, we get
\begin{equation}\nonumber
\begin{split}
\|I_{g}f\|_{\mathcal{L}^{2, 1-\frac{2}{p}}}&\approx \sup_{a\in
\mathbb{D}}\Big((1-|a|^{2})^{\frac{2}{p}}\int_{\mathbb{D}}|f'(z)|^{2}|g(z)|^{2}(1-|\sigma_{a}(z)|^{2})dA(z)\Big)^{1/2}
\\&\lesssim\sup_{a\in
\mathbb{D}}\Big((1-|a|^{2})^{\frac{2}{p}}\int_{\mathbb{D}}|f'(z)|^{2}|g(z)|^{2}\log\frac{1}{|\sigma_{a}(z)|}
dA(z)\Big)^{1/2}
\\&\lesssim \|g\|_{\infty}\sup_{a\in
\mathbb{D}}(1-|a|^{2})^{\frac{1}{p}}\|f\circ\sigma_{a}-f(a)\|_{H^{2}}
\\&\lesssim \|g\|_{\infty}\sup_{a\in
\mathbb{D}}(1-|a|^{2})^{\frac{1}{p}}\|f\circ\sigma_{a}\|_{H^{2}}
\\&\lesssim\|g\|_{\infty} \sup_{a\in
\mathbb{D}}(1-|a|^{2})^{\frac{1}{p}}\|f\circ\sigma_{a}\|_{H^{p}}
\\&\lesssim \|g\|_{\infty}\|f\|_{H^{p}},
\end{split}
\end{equation}
where we used the fact that
$$\|f\circ\sigma_{a}\|_{H^{p}}\leq \Big(\frac{1+|a|}{1-|a|}\Big)^{1/p}\|f\|_{H^{p}}$$
(\cite[Theorem 3.6]{CM}). Therefore
$$\|I_{g}\|\lesssim \|g\|_{\infty}.$$
The proof is completed.
\end{proof}

In \cite[Theorem 9]{WZ}, Wu proved that, for $g\in H(\mathbb{D})$,
$T_{g}: H^{p}(\mathbb{D})\rightarrow
\mathcal{L}^{2,1-\frac{2}{p}}(\mathbb{D}) (2<p\leq \infty)$ is
bounded if and if $g\in BMOA$. We next estimate the norm of $T_{g}$.

\begin{theorem}\label{th3}
Let $2<p\leq \infty$ and $g\in H(\mathbb{D})$. If $T_{g}:\
H^{p}(\mathbb{D})\rightarrow
\mathcal{L}^{2,1-\frac{2}{p}}(\mathbb{D})$ is bounded, then
$\|T_{g}\|\approx \|g\|_{BMOA}$.
\end{theorem}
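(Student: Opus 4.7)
The plan is to follow the template of the proof of Theorem \ref{th2}, adapting the key inequalities to the $H^p$ setting. The basic observation that drives the adaptation is that $\lambda = 1 - 2/p$ makes the exponent $(1-\lambda)/2 = 1/p$ match the Hardy space growth rate $|f(b)| \lesssim \|f\|_{H^p}/(1-|b|)^{1/p}$, so the bookkeeping from Theorem \ref{th2} transfers almost verbatim.

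For the upper bound $\|T_g\| \lesssim \|g\|_{BMOA}$, I would fix $f \in H^p$ and an arc $I \subset \partial\mathbb{D}$ with center $\zeta$, set $b = (1-|I|)\zeta$, and split
\begin{equation*}
\frac{1}{|I|^{1-2/p}}\int_{S(I)}|f(z)|^{2}|g'(z)|^{2}(1-|z|^2)\,dA(z) \lesssim I_{1}+I_{2},
\end{equation*}
with $I_{1}$ containing $|f(b)|^{2}$ and $I_{2}$ containing $|f(z)-f(b)|^{2}$. For $I_{1}$, the standard Hardy growth bound $|f(b)|\lesssim \|f\|_{H^p}/(1-|b|)^{1/p}\approx \|f\|_{H^p}/|I|^{1/p}$ combined with Theorem B on $S(I)$ produces $\|g\|_{BMOA}^{2}\|f\|_{H^p}^{2}$. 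For $I_{2}$, as in Theorem \ref{th2}, I would use $1-|\sigma_{b}(z)|^{2}\gtrsim (1-|z|^{2})/|I|$ on $S(I)$, change variables $z = \sigma_{b}(w)$ to rewrite $I_{2}$ as
\begin{equation*}
(1-|b|^{2})^{2/p}\int_{\mathbb{D}}|f\circ\sigma_{b}(w)-f(b)|^{2}|(g\circ\sigma_{b})'(w)|^{2}(1-|w|^{2})\,dA(w),
\end{equation*}
and then invoke Theorem A together with $\|g\circ\sigma_{b}\|_{BMOA}\lesssim \|g\|_{BMOA}$ to reduce to $\|f\circ\sigma_{b}-f(b)\|_{H^{2}}^{2}$. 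Since $p>2$, this is bounded by $\|f\circ\sigma_{b}-f(b)\|_{H^{p}}^{2}$, and then the change-of-variable estimate $\|f\circ\sigma_{b}\|_{H^{p}}\leq ((1+|b|)/(1-|b|))^{1/p}\|f\|_{H^p}$ from \cite[Theorem 3.6]{CM}, already used in Theorem \ref{th31}, absorbs the factor $(1-|b|^{2})^{2/p}$ and yields $I_{2}\lesssim \|g\|_{BMOA}^{2}\|f\|_{H^{p}}^{2}$.

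For the lower bound, I would reuse the family of test functions $f_{b}(z)=(1-|b|^{2})^{1-1/p}/(\overline{b}(1-\overline{b}z))$, $\tfrac12\le|b|<1$, from the proof of Theorem \ref{th31}, which satisfy $\|f_{b}\|_{H^{p}}\lesssim 1$. For the arc $I$ with $|I|=1-|b|$ centered at $b/|b|$, a direct estimate gives $|f_{b}(z)|\approx (1-|b|)^{-1/p}$ for $z\in S(I)$, so
\begin{equation*}
\|T_{g}\|^{2}\gtrsim \|T_{g}f_{b}\|_{\mathcal{L}^{2,1-2/p}}^{2}\gtrsim \frac{1}{|I|}\int_{S(I)}|g'(z)|^{2}(1-|z|^{2})\,dA(z).
\end{equation*}
Taking the supremum over arcs with $|I|\le 1/2$ and applying Theorem B produces $\|g\|_{BMOA}\lesssim \|T_{g}\|$, up to the usual handling of the $|g(0)|$ term (which is independent of $T_{g}$ and at worst requires the convention that $g$ is normalized or the implicit modding out of constants, consistent with Theorem \ref{th2}).

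The main technical point, and the place where the argument genuinely differs from Theorem \ref{th2}, is the $I_{2}$ estimate: the target space already knows the right scaling in $|I|$, but the domain is $H^{p}$ rather than $\mathcal{L}^{2,\lambda}$, so the Littlewood--Paley reduction used in Theorem \ref{th2} is replaced by the Carleson embedding for $H^{2}$ (Theorem A) composed with the Hardy space inclusion $H^{p}\hookrightarrow H^{2}$ and the automorphic norm bound for $\sigma_{b}$. Once that chain is set up, the exponents $\lambda=1-2/p$ and $1/p$ balance exactly as they must.
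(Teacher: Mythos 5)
Your argument is correct and follows essentially the same route as the paper's proof: the same $E_1+E_2$ (your $I_1+I_2$) decomposition with the Hardy growth estimate, Theorem B, the Carleson embedding via Theorem A, and the automorphism norm bound from \cite[Theorem 3.6]{CM} for the upper bound, and a unit-norm $H^p$ test function comparable to $|I|^{-1/p}$ on $S(I)$ for the lower bound. The only cosmetic difference is your choice of test function $(1-|b|^2)^{1-1/p}/(\overline{b}(1-\overline{b}z))$ in place of the paper's $(1-|b|^2)/(1-\overline{b}z)^{1+1/p}$, which plays an identical role.
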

\begin{proof}
The proof is similar to that of Theorem $\ref{th2}$, for the
completeness of the paper, we give the sketch of the proof below.
For any $I\subset
\partial \mathbb{D}$, let $\zeta$ be the center of $I$ and
$b=(1-|I|)\zeta$. Consider the function
$$h_{b}(z)=\frac{1-|b|^{2}}{(1-\overline{b}z)^{1+\frac{1}{p}}},\ \ z\in \mathbb{D}.$$
It is easy to see that $h_{b}\in H^{p}(\mathbb{D})$ with
$\|h_{b}\|_{H^{p}}\lesssim 1$. So,
\begin{equation}\nonumber
\begin{split}
\frac{1}{|I|}\int_{S(I)}|g'(z)|^{2}(1-|z|^{2})dA(z) &\lesssim
\frac{1}{|I|^{1-\frac{2}{p}}}\int_{S(I)}|h_{b}(z)|^{2}|g'(z)|^{2}(1-|z|^{2})dA(z)\\&
=\frac{1}{|I|^{1-\frac{2}{p}}}\int_{S(I)}|(T_{g}h_{b})'(z)|^{2}(1-|z|^{2})dA(z)
\\&\leq \|T_{g}h_{b}\|_{\mathcal{L}^{2,1-\frac{2}{p}}}^{2}
\\&\leq \|T_{g}\|^{2}\|h_{b}\|_{H^{p}}^{2}
\\&\lesssim \|T_{g}\|^{2}.
\end{split}
\end{equation}
It follows that $g\in BMOA$ with $\|g\|_{BMOA}\lesssim \|T_{g}\|$.

On the other hand, if $p=\infty$, then, for  $f\in
H^{\infty}(\mathbb{D})$,
\begin{equation}\nonumber
\begin{split}
\frac{1}{|I|}\int_{S(I)}|(T_{g}f)'(z)|^{2}(1-|z|^{2})dA(z) \lesssim
\|f\|_{\infty}^{2}\|g\|_{BMOA}^{2},
\end{split}
\end{equation}
which implies $$\|T_{g}\|\lesssim \|g\|_{BMOA}.$$

If $2<p<\infty$, for $f\in H^{p}(\mathbb{D})$, write
\begin{equation}\nonumber
\begin{split}
\frac{1}{|I|^{1-\frac{2}{p}}}&\int_{S(I)}|(T_{g}f)'(z)|^{2}(1-|z|^{2})dA(z)
\\&\leq\frac{2}{|I|^{1-\frac{2}{p}}}\int_{S(I)}|f(b)|^{2}|g'(z)|^{2}(1-|z|^{2})dA(z)
\\&\qquad +\frac{2}{|I|^{1-\frac{2}{p}}}\int_{S(I)}|f(z)-f(b)|^{2}|g'(z)|^{2}(1-|z|^{2})dA(z)
\\&\triangleq E_{1}+E_{2}.
\end{split}
\end{equation}

Note that (Theorem 9.1 of \cite{zhu}),
$$|f(b)|\leq \frac{\|f\|_{H^{p}}}{(1-|b|^{2})^{\frac{1}{p}}}\lesssim \frac{\|f\|_{H^{p}}}{|I|^{\frac{1}{p}}},\ \ b\in \mathbb{D}.$$
We have
$$E_{1}\lesssim \|g\|_{BMOA}^{2}\|f\|_{H^{p}}^{2}.$$

Since $1-|\sigma_{b}(z)|^{2}\gtrsim \frac{1-|z|^{2}}{|I|}$ for all
$z\in S(I)$, then
\begin{equation}\nonumber
\begin{split}
E_{2}&\lesssim
|I|^{\frac{2}{p}}\int_{S(I)}|f(z)-f(b)|^{2}|g'(z)|^{2}(1-|\sigma_{b}(z)|^{2})dA(z)
\\&\lesssim (1-|b|^{2})^{\frac{2}{p} }\int_{\mathbb{D}}|f\circ\sigma_{b}(w)-f(b)|^{2}|(g\circ
\sigma_{b})'(z)|^{2}(1-|w|^{2})dA(w).
\end{split}
\end{equation}
If $g\in BMOA$, then $|(g\circ \sigma_{b})'(z)|^{2}(1-|w|^{2})dA(w)$
is a Carleson measure. For $p>2$, $f\in H^{p}(\mathbb{D})$ implies
that $f\circ\sigma_{b}\in H^{2}(\mathbb{D})$. Using Theorem A yields
\begin{equation}\nonumber
\begin{split}
E_{2}&\lesssim(1-|b|^{2})^{\frac{2}{p}}
\|g\circ\sigma_{b}\|_{BMOA}^{2}\|f\circ\sigma_{b}-f(b)\|_{H^{2}}^{2}
\\&\lesssim (1-|b|^{2})^{\frac{2}{p}}
\|g\|_{BMOA}^{2}\|f\circ\sigma_{b}\|_{H^{2}}^{2}
\\&\leq (1-|b|^{2})^{\frac{2}{p}}
\|g\|_{BMOA}^{2}\|f\circ\sigma_{b}\|_{H^{p}}^{2}
\\&\lesssim\|g\|_{BMOA}^{2}\|f\|_{H^{p}}^{2},
\end{split}
\end{equation}
where we used the fact that
$$\|f\circ\sigma_{b}\|_{H^{p}}\leq \Big(\frac{1+|b|}{1-|b|}\Big)^{1/p}\|f\|_{H^{p}}$$
(\cite[Theorem 3.6]{CM}).
 Hence
$$\|T_{g}f\|_{\mathcal{L}^{2,1-\frac{2}{p}}}\lesssim \|g\|_{BMOA}\|f\|_{H^{p}}.$$
Therefore
$$\|T_{g}\|\lesssim \|g\|_{BMOA}.$$
 The theorem is proved.
\end{proof}

\begin{remark}\label{rem3}
If $p=2$, then
$\mathcal{L}^{2,1-\frac{2}{p}}(\mathbb{D})=H^{2}(\mathbb{D})$. From
the proof of Theorem $\ref{th31}$, we know that Theorem $\ref{th31}$
holds for $p=2$.  Theorem $\ref{th3}$ for $p=2$ is also true (see,
for example \cite{AC} or \cite{PC}).
\end{remark}

\begin{corollary}
Let $2<p\leq \infty$ and $g\in H(\mathbb{D})$. Then the
multiplication operator $M_{g}:\ H^{p}(\mathbb{D})\rightarrow
\mathcal{L}^{2,1-\frac{2}{p}}(\mathbb{D})$ is bounded if and only if
$g\in H^{\infty}(\mathbb{D})$.
\end{corollary}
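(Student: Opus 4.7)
The plan is to handle both implications by exploiting the identity
\[
M_{g}f(z)=f(0)g(0)+I_{g}f(z)+T_{g}f(z),
\]
together with the already established Theorem \ref{th31} and Wu's Theorem 9 (cited in the paragraph just before Theorem \ref{th3}). For the forward implication, if $g\in H^{\infty}(\mathbb{D})$ then in particular $g\in BMOA$, so both $I_{g}$ and $T_{g}$ are bounded from $H^{p}(\mathbb{D})$ into $\mathcal{L}^{2,1-2/p}(\mathbb{D})$; since the constant $f(0)g(0)$ contributes a term of norm at most $\|g\|_{\infty}\|f\|_{H^{p}}$, adding the three pieces yields the boundedness of $M_{g}$.

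For the converse I would split into two cases. When $2<p<\infty$, I would borrow the test function
\[
f_{b}(z)=\frac{(1-|b|^{2})^{1-1/p}}{\overline{b}(1-\overline{b}z)},\qquad \tfrac12\leq|b|<1,
\]
from the proof of Theorem \ref{th31}, for which $\|f_{b}\|_{H^{p}}\lesssim 1$ uniformly in $b$. Since $1-2/p\in(0,1)$, Lemma \ref{le2} applies (the growth exponent $(1-\lambda)/2$ equals $1/p$) and gives the pointwise bound
\[
|f_{b}(z)g(z)|=|M_{g}f_{b}(z)|\lesssim \frac{\|M_{g}\|}{(1-|z|^{2})^{1/p}}.
\]
Evaluating at $z=b$ and using $|f_{b}(b)|=|b|^{-1}(1-|b|^{2})^{-1/p}$, the weights cancel and one obtains $|g(b)|\lesssim \|M_{g}\|$ on $\tfrac12\leq|b|<1$; an application of the maximum modulus principle on $\{|z|\leq 1/2\}$ extends the estimate to all of $\mathbb{D}$, so $g\in H^{\infty}(\mathbb{D})$.

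The main obstacle is the case $p=\infty$, in which $\mathcal{L}^{2,1-2/p}=BMOA$ admits only logarithmic pointwise bounds; Lemma \ref{le2} no longer applies and the test-function argument of the previous paragraph fails to produce a uniform estimate on $g$. I would circumvent this by exploiting the decomposition once more. Testing $M_{g}$ on $f\equiv 1\in H^{\infty}$ gives $g=M_{g}(1)\in BMOA$ with $\|g\|_{BMOA}\leq\|M_{g}\|$, so Wu's Theorem 9 makes $T_{g}:H^{\infty}\to BMOA$ bounded. Rewriting the basic identity as $I_{g}f=M_{g}f-T_{g}f-f(0)g(0)$ then shows that $I_{g}:H^{\infty}\to BMOA$ is bounded, and applying Theorem \ref{th31} with $p=\infty$ forces $g\in H^{\infty}(\mathbb{D})$.
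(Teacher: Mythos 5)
Your proof is correct, and for the sufficiency and for the necessity when $2<p<\infty$ it follows essentially the same route as the paper: the decomposition $M_{g}f=f(0)g(0)+I_{g}f+T_{g}f$ together with Theorem \ref{th31} and Wu's theorem in one direction, and the test function $\frac{(1-|b|^{2})^{1-1/p}}{1-\overline{b}z}$ combined with the growth estimate of Lemma \ref{le2} in the other. Where you genuinely diverge is at $p=\infty$. The paper applies Lemma \ref{le2} uniformly for all $2<p\leq\infty$, but that lemma is stated only for $0<\lambda<1$, and at $p=\infty$ the target space is $\mathcal{L}^{2,1}=BMOA$, whose correct pointwise growth is logarithmic rather than bounded; plugging the logarithmic bound into the paper's argument yields only $|g(b)|\lesssim\|M_{g}\|\log\frac{2}{1-|b|^{2}}$, which does not give $g\in H^{\infty}$. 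Your workaround --- testing on $f\equiv 1$ to get $g\in BMOA$, invoking Wu's theorem to bound $T_{g}$, subtracting to conclude $I_{g}:H^{\infty}\to BMOA$ is bounded, and then quoting Theorem \ref{th31} at $p=\infty$ --- is a clean and valid patch of this endpoint case, so your treatment is in fact more careful than the paper's at $p=\infty$ while costing nothing elsewhere. The only cosmetic differences in the finite-$p$ case are the extra factor $\overline{b}$ in your test function (harmless given your restriction $|b|\geq 1/2$ and the maximum modulus step) versus the paper's version without it.
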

\begin{proof}
If $M_{g}:\ H^{p}(\mathbb{D})\rightarrow
\mathcal{L}^{2,1-\frac{2}{p}}(\mathbb{D})$ is bounded, consider the
function
$$f(z)=\frac{(1-|b|^{2})^{1-\frac{1}{p}}}{(1-\overline{b}z)}.$$
It is easy to see that $f\in H^{p}(\mathbb{D})$ and
$\|f\|_{H^{p}}\lesssim 1$. Applying Lemma $\ref{le2}$, we get
\begin{equation}\nonumber
\begin{split}
\Big|\frac{(1-|b|^{2})^{1-\frac{1}{p}}}{(1-\overline{b}z)}g(z)\Big|&\lesssim
\|M_{g}f\|_{\mathcal{L}^{2,1-\frac{2}{p}}}\frac{1}{(1-|z|^{2})^{\frac{1}{p}}}
\\&\lesssim \|M_{g}\|\frac{1}{(1-|z|^{2})^{\frac{1}{p}}}.
\end{split}
\end{equation}
Taking $z=b$ in the inequality above yields $|g(b)|\lesssim
\|M_{g}\|$. Since $b$ is arbitrary in $\mathbb{D}$, we have $g\in
H^{\infty}(\mathbb{D})$.

If $g\in H^{\infty}(\mathbb{D})$, from \cite[Theorem 9]{WZ} and
Theorem 3, we know that $I_{g}$ and $T_{g}$ are both bounded from
$H^{p}(\mathbb{D})$ to $\mathcal{L}^{2,1-\frac{2}{p}}(\mathbb{D})$.
So $M_{g}:\ H^{p}(\mathbb{D})\rightarrow
\mathcal{L}^{2,1-\frac{2}{p}}(\mathbb{D})$ is bounded.
\end{proof}

\section{Essential norm of $I_{g}$ and $T_{g}$}
Let $X$ be a Banach space and $T$ is a bounded linear operator on
$X$. The essential norm of $T$ (denoted by $\|T\|_{e}$) is defined
as follows,
$$\|T\|_{e}=\inf\{\|T-K\|: K\ \mbox{are compact operators on}\ X\}.$$
Since that $T$ is compact if and only if $\|T\|_{e}=0$, then the
estimation of $\|T\|_{e}$ indicates the condition for $T$ to be
compact. In this section, we estimate the essential norm of $I_{g},
T_{g}$.

Let $X$ and $Y$ be two Banach spaces with $X\subset Y$. If $f\in Y$,
then the distance from functions $f$ to the Banach space $X$ is
defined as
$$\mbox{dist}_{Y}(f, X)=\inf_{g\in X}\|f-g\|_{Y}.$$

 In the following lemma, Laitila, Miihkinen and Nieminen (\cite[Lemma 3]{JSP}) characterized the distance
 from functions $f\in BMOA$ to $VMOA$ space. Here and afterward we denote $g_{r}(z)=g(rz)$ with
$0<r<1$.
 \begin{lemma}\label{le5}
 Let $g\in BMOA$. Then
 $$\dist(g, VMOA)\approx \limsup_{r\rightarrow 1}\|g-g_{r}\|_{BMOA}\approx \limsup_{|a|\rightarrow 1}\|g\circ\sigma_{a}-g(a)\|_{H^{2}}.$$
 \end{lemma}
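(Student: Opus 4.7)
My plan is to establish three inequalities showing pairwise comparability: $C\lesssim A$, $A\leq B$, and $B\lesssim C$, where $A=\dist(g,VMOA)$, $B=\limsup_{r\to 1}\|g-g_r\|_{BMOA}$, and $C=\limsup_{|a|\to 1}\|g\circ\sigma_a-g(a)\|_{H^2}$. The first two are routine. For $C\lesssim A$, given any $h\in VMOA$ I use the triangle inequality $\|g\circ\sigma_a-g(a)\|_{H^2}\leq\|(g-h)\circ\sigma_a-(g-h)(a)\|_{H^2}+\|h\circ\sigma_a-h(a)\|_{H^2}$; the first term is uniformly $\lesssim\|g-h\|_{BMOA}$ by the M\"obius-invariant characterization of the BMOA norm, while the second vanishes as $|a|\to 1$ since $h\in VMOA$. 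Taking $\limsup_{|a|\to 1}$ and then $\inf_{h\in VMOA}$ gives the claim. For $A\leq B$, note that each $g_r$ extends analytically past $\partial\mathbb{D}$, hence $g_r\in VMOA$, so $\dist(g,VMOA)\leq\|g-g_r\|_{BMOA}$ for every $r<1$; take $\limsup_{r\to 1}$.

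The substantive step is $B\lesssim C$. I apply $\|f\|_{BMOA}\approx|f(0)|+\sup_{a\in\mathbb{D}}\|f\circ\sigma_a-f(a)\|_{H^2}$ to $f=g-g_r$, using $(g-g_r)(0)=0$. Fix $\epsilon>0$ and choose $\rho_0<1$ so that $\|g\circ\sigma_b-g(b)\|_{H^2}\leq C+\epsilon$ for $|b|\geq\rho_0$; then select $\rho\in(\rho_0,1)$ and split the supremum into $|a|\leq\rho$ versus $|a|>\rho$. For the inner region, since $g\in BMOA\subset H^2$ we have $\|g-g_r\|_{H^2}\to 0$ as $r\to 1$, and the standard bounds $\|(g-g_r)\circ\sigma_a\|_{H^2}\lesssim(1-\rho)^{-1/2}\|g-g_r\|_{H^2}$ and $|(g-g_r)(a)|\lesssim(1-\rho^2)^{-1/2}\|g-g_r\|_{H^2}$ show that this contribution tends to $0$ as $r\to 1$ uniformly in $|a|\leq\rho$.

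The main obstacle is the range $|a|>\rho$, which I treat by a subordination trick. Set $\psi:=\sigma_{ra}\circ(r\sigma_a)$; since $\sigma_{ra}$ is an involution and $r\sigma_a(0)=ra$, one verifies $\psi(0)=0$ and $\psi:\mathbb{D}\to\mathbb{D}$, so that $r\sigma_a=\sigma_{ra}\circ\psi$. Consequently
\begin{equation*}
g_r\circ\sigma_a-g_r(a)=\bigl(g\circ\sigma_{ra}-g(ra)\bigr)\circ\psi,
\end{equation*}
and Littlewood's subordination principle gives $\|g_r\circ\sigma_a-g_r(a)\|_{H^2}\leq\|g\circ\sigma_{ra}-g(ra)\|_{H^2}$. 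For $r$ close to $1$ and $|a|>\rho$ we have $|ra|\geq r\rho\geq\rho_0$, so the right-hand side is $\leq C+\epsilon$. Combined with $\|g\circ\sigma_a-g(a)\|_{H^2}\leq C+\epsilon$, the triangle inequality bounds the $|a|>\rho$ contribution by $2(C+\epsilon)$. Passing to $\limsup_{r\to 1}$ and then $\epsilon\to 0$ yields $B\lesssim C$, completing the chain $A\approx B\approx C$.
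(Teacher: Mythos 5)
Your argument is correct. Note that the paper does not prove this lemma at all: it is quoted verbatim from Laitila--Miihkinen--Nieminen \cite[Lemma 3]{JSP}, so there is no internal proof to compare against; what you have written is a complete, self-contained derivation along the lines of the standard distance-to-$VMOA$ argument. The two easy inequalities ($C\lesssim A$ via the triangle inequality and the M\"obius-invariant norm, and $A\leq B$ since each $g_{r}$ is analytic past the boundary and hence lies in $VMOA$) are handled correctly. The substantive direction $B\lesssim C$ is also sound: the splitting of $\sup_{a}$ at $|a|=\rho$, the uniform decay of the inner contribution from $\|g-g_{r}\|_{H^{2}}\to 0$ together with the composition bound $\|f\circ\sigma_{a}\|_{H^{2}}\leq\bigl(\tfrac{1+|a|}{1-|a|}\bigr)^{1/2}\|f\|_{H^{2}}$ and the pointwise $H^{2}$ growth estimate, and above all the subordination identity $r\sigma_{a}=\sigma_{ra}\circ\psi$ with $\psi(0)=0$, giving $\|g_{r}\circ\sigma_{a}-g_{r}(a)\|_{H^{2}}\leq\|g\circ\sigma_{ra}-g(ra)\|_{H^{2}}$ by Littlewood's principle, are all legitimate; you also correctly noted the only delicate point, namely that one must take $r\geq\rho_{0}/\rho$ so that $|ra|\geq\rho_{0}$ before invoking the choice of $\rho_{0}$. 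The argument gives the chain $A\lesssim B\lesssim C\lesssim A$ (the first two with constant $1$ up to the norm equivalence), which is exactly the assertion; as a bonus it makes the paper self-contained at this point, whereas the text relies on the external reference.
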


\begin{theorem}\label{th4}
Suppose $0<\lambda<1$ and $g\in H(\mathbb{D})$. If $I_{g}$ is a
bounded operator on $\mathcal{L}^{2, \lambda}(\mathbb{D})$, then
$$\|I_{g}\|_{e}\approx \|g\|_{\infty}.$$
\end{theorem}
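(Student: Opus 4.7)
The plan is to handle the upper and lower bounds separately. The upper bound $\|I_g\|_e \leq \|I_g\| \lesssim \|g\|_\infty$ is immediate from Theorem \ref{th1}, since the essential norm of a bounded operator is always dominated by its operator norm.

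For the lower bound $\|I_g\|_e \gtrsim \|g\|_\infty$, I would follow the standard weakly-null test-function strategy used for essential norms of integral operators, as in \cite{JSP, LLX}. If $g$ is a nonzero constant $c$, then $I_g f = c(f - f(0))$ is a rank-one perturbation of $cI$ on the infinite-dimensional space $\mathcal{L}^{2,\lambda}$, so $\|I_g\|_e = |c| = \|g\|_\infty$ directly. Otherwise, the maximum modulus principle gives $\|g\|_\infty = \limsup_{|z|\to 1}|g(z)|$, so I can pick $\{b_n\} \subset \mathbb{D}$ with $|b_n| \to 1$ and $|g(b_n)| \to \|g\|_\infty$. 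Using the test functions $f_{b_n}$ from Lemma \ref{le4}, which satisfy $\|f_{b_n}\|_{\mathcal{L}^{2,\lambda}} \lesssim 1$, the computation in the lower-bound part of Theorem \ref{th1} (invariant change of variables plus Lemma 4.12 of \cite{zhu}) already delivers
$$\|I_g f_{b_n}\|_{\mathcal{L}^{2,\lambda}} \gtrsim |g(b_n)|.$$

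The key remaining step is to prove that every compact operator $K$ on $\mathcal{L}^{2,\lambda}(\mathbb{D})$ satisfies $\|K f_{b_n}\|_{\mathcal{L}^{2,\lambda}} \to 0$. The explicit form $f_{b_n}(z) = -z(1-|b_n|^2)^{(1+\lambda)/2}(1-\bar{b}_n z)^{-1}$ shows that $f_{b_n} \to 0$ uniformly on compact subsets of $\mathbb{D}$ while remaining norm-bounded. Compactness of $K$ implies that every subsequence of $\{Kf_{b_n}\}$ has a further sub-subsequence converging in norm, and Lemma \ref{le2} applied to differences forces norm convergence in $\mathcal{L}^{2,\lambda}$ to imply uniform convergence on compact subsets; any such norm limit must therefore agree with the pointwise limit of $Kf_{b_n}$, which is readily identified as $0$. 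Consequently $\|Kf_{b_n}\|_{\mathcal{L}^{2,\lambda}} \to 0$, and for any compact $K$,
$$\|I_g - K\| \geq \frac{\|(I_g - K)f_{b_n}\|_{\mathcal{L}^{2,\lambda}}}{\|f_{b_n}\|_{\mathcal{L}^{2,\lambda}}} \gtrsim \|I_g f_{b_n}\|_{\mathcal{L}^{2,\lambda}} - \|Kf_{b_n}\|_{\mathcal{L}^{2,\lambda}},$$
whose $\limsup$ is $\gtrsim \|g\|_\infty$; infimizing over $K$ gives the lower bound.

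The main obstacle is this last vanishing step $\|Kf_{b_n}\|_{\mathcal{L}^{2,\lambda}} \to 0$. The cleanest route is to prove $f_{b_n} \rightharpoonup 0$ weakly in $\mathcal{L}^{2,\lambda}$, after which the standard fact that compact operators send weakly null bounded sequences to norm null sequences concludes. Establishing weak convergence requires testing against a sufficiently rich family of functionals in $(\mathcal{L}^{2,\lambda})^*$; the uniform bound $\|f_{b_n}\|_{\mathcal{L}^{2,\lambda}} \lesssim 1$ together with the boundedness of point evaluations provided by Lemma \ref{le2} makes this a routine density argument, in the style of \cite{JSP, LLX}.
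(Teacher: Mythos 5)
Your proposal matches the paper's proof essentially step for step: the upper bound follows from Theorem \ref{th1} since $\|I_{g}\|_{e}\leq\|I_{g}\|$, and the lower bound uses the same test functions $f_{b_{n}}$ from Lemma \ref{le4}, the same M\"obius change of variables giving $\|I_{g}f_{b_{n}}\|_{\mathcal{L}^{2,\lambda}}\gtrsim|g(b_{n})|$, and the vanishing of $\|Kf_{b_{n}}\|_{\mathcal{L}^{2,\lambda}}$ for compact $K$. The one step you single out as the main obstacle is the very step the paper asserts without proof (boundedness plus uniform convergence to $0$ on compacta implies $\|Kf_{b_{n}}\|\to 0$), and your suggested resolution via weak-null convergence is the standard and correct way to justify it.
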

\begin{proof}
For compact operators $K$, it follows from Theorem $\ref{th1}$ that
$$\|I_{g}\|_{e}=\inf_{K} \|I_{g}-K\|\leq \|I_{g}\|\lesssim \|g\|_{\infty}.$$

On the other hand, choose the sequence $\{b_{n}\}\subset \mathbb{D}$
such that $|b_{n}|\rightarrow 1$ as $n\rightarrow \infty$. Consider
the sequence of functions
$f_{n}(z)=(1-|b_{n}|^{2})^{\frac{\lambda-1}{2}}(\sigma_{b_{n}}(z)-b_{n})$.
It follows from Lemma $\ref{le4}$ that $\|f_{n}\|_{\mathcal{L}^{2,
\lambda}}\lesssim 1$. Note that $f_{n}$ can be written as
$$f_{n}(z) =-(1-|b_{n}|^{2})^{\frac{\lambda+1}{2}}\int_{0}^{z}\frac{dw}{(1-\overline{b_{n}}w)^{2}}$$
and $f_{n}$ converges to zero uniformly on compact subsets of
$\mathbb{D}$. Then $\|Kf_{n}\|_{\mathcal{L}^{2, \lambda}
}\rightarrow 0$ as $n\rightarrow \infty$ for any compact operator
$K$ on $\mathcal{L}^{2, \lambda}(\mathbb{D})$. Since
\begin{equation}\nonumber
\begin{split}
\|I_{g}-K\|&\gtrsim\limsup_{n\rightarrow
\infty}\|(I_{g}-K)f_{n}\|_{\mathcal{L}^{2, \lambda}}\\&\geq
\limsup_{n\rightarrow \infty}(\|I_{g}f_{n}\|_{\mathcal{L}^{2,
\lambda}}-\|Kf_{n}\|_{\mathcal{L}^{2, \lambda}})\\&\geq
\limsup_{n\rightarrow \infty}\|I_{g}f_{n}\|_{\mathcal{L}^{2,
\lambda}}
\end{split}
\end{equation}
and
\begin{equation}\nonumber
\begin{split}
\|I_{g}f_{n}\|_{\mathcal{L}^{2, \lambda}}&\approx \sup_{a\in
\mathbb{D}}\Big((1-|a|^{2})^{1-\lambda}\int_{\mathbb{D}}|f_{n}'(z)|^{2}|g(z)|^{2}(1-|\sigma_{a}(z)|^{2})dA(z)
\Big)^{1/2}\\&\geq\Big(\int_{\mathbb{D}}
|\sigma_{b_{n}}'(z)|^{2}|g(z)|^{2}(1-|\sigma_{b_{n}}(z)|^{2})dA(z)\Big)^{1/2}
\\&=\Big(\int_{\mathbb{D}}
|g(\sigma_{b_{n}}(w))|^{2}(1-|w|^{2})dA(w)\Big)^{1/2}
\\&\gtrsim |g(b_{n})|.
\end{split}
\end{equation}
We get
$$\|I_{g}\|_{e}\gtrsim\limsup_{n\rightarrow \infty}|g(b_{n})|.$$ The arbitrary choice
of the sequence $\{b_{n}\}$ implies
$\|I_{g}\|_{e}\gtrsim\|g\|_{\infty}$. The proof of Theorem
$\ref{th4}$ is completed.
\end{proof}

For the proof of the next theorem, we need the following technical
lemma.

\begin{lemma}\label{le6}
Suppose $0<\lambda<1$. If $g\in BMOA$, then $T_{g_{r}}:
\mathcal{L}^{2, \lambda}(\mathbb{D})\rightarrow \mathcal{L}^{2,
\lambda}(\mathbb{D})$ is compact.
\end{lemma}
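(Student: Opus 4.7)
The plan is to establish compactness via the standard criterion for integral operators between spaces of analytic functions: $T_{g_r}$ is compact on $\mathcal{L}^{2,\lambda}(\mathbb{D})$ if and only if $\|T_{g_r} f_n\|_{\mathcal{L}^{2,\lambda}} \to 0$ for every bounded sequence $\{f_n\}\subset \mathcal{L}^{2,\lambda}(\mathbb{D})$ with $f_n \to 0$ uniformly on compact subsets of $\mathbb{D}$. The ``only if'' direction is immediate; the ``if'' direction follows because Lemma~\ref{le2} ensures that norm-bounded sequences in $\mathcal{L}^{2,\lambda}(\mathbb{D})$ are locally uniformly bounded, hence normal families by Montel's theorem, and one then extracts a subsequential limit (which lies in $\mathcal{L}^{2,\lambda}$ by Fatou applied to the characterization in Lemma~\ref{le1}(ii)).

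With this in hand, take any such sequence $\{f_n\}$ and write
\begin{equation}\nonumber
\|T_{g_r} f_n\|_{\mathcal{L}^{2,\lambda}}^{2} \approx \sup_{I\subset\partial\mathbb{D}} \frac{1}{|I|^{\lambda}}\int_{S(I)} |f_n(z)|^{2}|g_r'(z)|^{2}(1-|z|^{2})\,dA(z).
\end{equation}
The central structural fact is that $g_r'(z)=rg'(rz)$ is analytic across $\partial\mathbb{D}$ (since $g\in H(\mathbb{D})$ and $0<r<1$), so $M:=\sup_{z\in\overline{\mathbb{D}}}|g_r'(z)|<\infty$. This reduces matters to showing that $\sup_{I}\frac{1}{|I|^{\lambda}}\int_{S(I)}|f_n(z)|^{2}(1-|z|^{2})\,dA(z)\to 0$.

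For $\varepsilon>0$, I would choose $s\in(0,1)$ near $1$ (depending on $\varepsilon$) and split $S(I)=A_1\cup A_2$ with $A_1=S(I)\cap\{|z|<s\}$ and $A_2=S(I)\cap\{|z|\geq s\}$. On $A_2$, apply Lemma~\ref{le2} to bound $|f_n(z)|^{2}(1-|z|^{2})\lesssim \|f_n\|_{\mathcal{L}^{2,\lambda}}^{2}(1-|z|^{2})^{\lambda}$, so that the $A_2$-contribution is controlled by $M^{2}\|f_n\|^{2}\cdot\frac{1}{|I|^{\lambda}}\int_{A_2}(1-|z|^{2})^{\lambda}\,dA$. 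On $A_1$, use the uniform convergence $\delta_n(s):=\max_{|z|\leq s}|f_n(z)|\to 0$ to bound the $A_1$-contribution by $M^{2}\delta_n(s)^{2}\cdot\frac{1}{|I|^{\lambda}}\int_{S(I)}(1-|z|^{2})\,dA$. Both $\frac{1}{|I|^{\lambda}}\int_{S(I)}(1-|z|^{2})\,dA$ and $\frac{1}{|I|^{\lambda}}\int_{A_2}(1-|z|^{2})^{\lambda}\,dA$ are bounded uniformly in $I$, using $\mathrm{area}(S(I))\sim|I|^{2}$ and $(1-|z|^{2})\sim|I|$ on $S(I)$. For fixed $s$ the $A_1$-term tends to $0$ as $n\to\infty$; for the $A_2$-term one checks the refined estimate $\frac{1}{|I|^{\lambda}}\int_{A_2}(1-|z|^{2})^{\lambda}\,dA\lesssim (1-s)^{\alpha}$ for some $\alpha>0$, uniformly in $I$, which makes it small once $s$ is close to $1$.

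The main obstacle is obtaining that last uniform estimate in $I$: for small $|I|$ one has $A_2=S(I)$ (when $|I|\leq 1-s$) and the bound is $\lesssim |I|^{2}(1-s)^{\lambda-1}\cdot(1-s)^{\lambda}$-type, while for large $|I|$ the set $A_2$ is an annular strip of radial width $1-s$ and one has to estimate $\int_{A_2}(1-|z|^2)^\lambda dA\lesssim |I|(1-s)^{\lambda+1}$. Handling both regimes carefully yields the required decay in $(1-s)$, after which combining the two estimates and letting first $s\to 1$ and then $n\to\infty$ completes the proof.
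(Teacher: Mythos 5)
Your proof is correct, but the convergence step is genuinely different from the paper's. Both arguments reduce to the same criterion (show $\|T_{g_r}f_n\|_{\mathcal{L}^{2,\lambda}}\to 0$ for bounded sequences tending to $0$ locally uniformly; you also justify this criterion via Montel and Fatou, which the paper simply takes for granted). After that the routes diverge. The paper bounds $|g_r'(z)|\lesssim \|g\|_{BMOA}/(1-r^2)$ via a cited lemma, passes to the M\"obius-invariant form of the norm from Lemma~\ref{le1}(iii), and uses the elementary inequality $(1-|a|^2)^{2-\lambda}/|1-\overline{a}z|^2\lesssim (1-|z|^2)^{-\lambda}$ to absorb the supremum over $a$ into a single fixed integral $\int_{\mathbb{D}}|f_n(z)|^2(1-|z|^2)^{1-\lambda}\,dA(z)$, which is then dispatched by dominated convergence with Lemma~\ref{le2} supplying the integrable majorant. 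You instead bound $|g_r'|$ by its supremum over $\overline{\mathbb{D}}$ (using analyticity of $g_r$ on a larger disc) and work directly with the Carleson-box form of the norm, splitting each $S(I)$ into a compact core handled by locally uniform convergence and a boundary collar handled by the Lemma~\ref{le2} growth bound together with a two-regime area estimate in $|I|$ versus $1-s$. Your two-regime computation does close: one gets $\frac{1}{|I|^{\lambda}}\int_{A_2}(1-|z|^2)^{\lambda}\,dA\lesssim (1-s)^{1+\lambda}$ uniformly in $I$ (the intermediate expression you write for the small-$|I|$ regime is garbled, but the correct bound there is $|I|^{2}\le (1-s)^{2}\le(1-s)^{1+\lambda}$, so the conclusion stands). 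The trade-off: the paper's dominated-convergence argument is shorter and avoids any case analysis, while yours is more elementary and quantitative (an explicit rate in $1-s$) and never needs the M\"obius-invariant characterization of the norm, only the box definition.
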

\begin{proof}
 Let $\{f_{n}\}$ be such that
$\|f_{n}\|_{\mathcal{L}^{2, \lambda}}\leq 1$ and $f_{n}\rightarrow
0$ uniformly on compact subsets of $\mathbb{D}$ as $n\rightarrow
\infty$. We need only to show that
$$\lim_{n\rightarrow \infty}\|T_{g_{r}}f_{n}\|_{ \mathcal{L}^{2, \lambda}}=0.$$
Since $\|g_{r}\|_{BMOA}\lesssim \|g\|_{BMOA} $ (see Lemma 1 of
\cite{YL} ),  for $z\in \mathbb{D}$ $$|g_{r}'(z)|\lesssim
\frac{\|g\|_{BMOA}}{1-r^{2}}.$$ We get
\begin{equation}\nonumber
\begin{split}
\|T_{g_{r}}f_{n} \|_{ \mathcal{L}^{2, \lambda}} &\lesssim \sup_{a\in
\mathbb{D}}\Big(
(1-|a|^{2})^{1-\lambda}\int_{\mathbb{D}}|f_{n}(z)|^{2}|g_{r}'(z)|^{2}(1-|\sigma_{a}(z)|^{2})dA(z)\Big)^{1/2}
\\&\lesssim \frac{\|g\|_{BMOA}}{1-r^{2}}\sup_{a\in
\mathbb{D}}\Big(
\int_{\mathbb{D}}|f_{n}(z)|^{2}\frac{(1-|z|^{2})(1-|a|^{2})^{2-\lambda}}{|1-\overline{a}z|^{2}}dA(z)\Big)^{1/2}
\\&\lesssim\frac{\|g\|_{BMOA}}{1-r^{2}}\Big(
\int_{\mathbb{D}}|f_{n}(z)|^{2}(1-|z|^{2})^{1-\lambda}dA(z)\Big)^{1/2}.
\end{split}
\end{equation}
Note that $\|f_{n}\|_{\mathcal{L}^{2, \lambda}}\leq 1$ and
$|f_{n}(z)|^{2}(1-|z|^{2})^{1-\lambda}\lesssim 1$ by Lemma
$\ref{le2}$. The desired result follows from the Dominated
Convergence Theorem.
\end{proof}

In the following theorem, as a main result of this section, we give
the essential norm of $T_{g}$ on
$\mathcal{L}^{2,\lambda}(\mathbb{D})$.

\begin{theorem}\label{th5}
Suppose $0<\lambda<1$ and $g\in BMOA$. Then $T_{g}: \mathcal{L}^{2,
\lambda}(\mathbb{D})\rightarrow \mathcal{L}^{2,
\lambda}(\mathbb{D})$ satisfies
$$\|T_{g}\|_{e}\approx \dist(g, VMOA)\approx \limsup_{|a|\rightarrow 1}\|g\circ\sigma_{a}-g(a)\|_{H^{2}}.$$
\end{theorem}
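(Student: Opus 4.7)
The plan is to prove the first equivalence $\|T_g\|_e \approx \dist(g, VMOA)$; the second follows directly from Lemma~\ref{le5}. For the upper bound, I will approximate $T_g$ by the compact operators $T_{g_r}$ with $r \in (0,1)$: since $g_r \in H^\infty \subset BMOA$, Lemma~\ref{le6} makes $T_{g_r}$ compact on $\mathcal{L}^{2,\lambda}(\mathbb{D})$, and Theorem~\ref{th2} applied to $g - g_r$ gives
$$\|T_g\|_e \leq \|T_g - T_{g_r}\| = \|T_{g-g_r}\| \lesssim \|g - g_r\|_{BMOA}.$$
Passing to $\limsup_{r \to 1}$ and invoking Lemma~\ref{le5} yields $\|T_g\|_e \lesssim \dist(g, VMOA)$.

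For the lower bound, I will use the test functions $F_{a_n}$ from Lemma~\ref{le4}: they satisfy $\|F_{a_n}\|_{\mathcal{L}^{2,\lambda}} \lesssim 1$ and converge to zero uniformly on compact subsets of $\mathbb{D}$ as $|a_n| \to 1$. A standard argument then gives $\|K F_{a_n}\|_{\mathcal{L}^{2,\lambda}} \to 0$ for every compact operator $K$, whence $\|T_g - K\| \gtrsim \limsup_n \|T_g F_{a_n}\|_{\mathcal{L}^{2,\lambda}}$. As an auxiliary step I will establish the characterization
$$\dist(g, VMOA)^2 \approx \limsup_{|I| \to 0} \frac{1}{|I|}\int_{S(I)}|g'(z)|^2(1-|z|^2)\,dA(z)$$
by combining Lemma~\ref{le5} with the Littlewood--Paley equivalence $\|g\circ\sigma_a - g(a)\|_{H^2}^2 \approx \int_\mathbb{D}|g'|^2(1-|\sigma_a|^2)\,dA$ and a dyadic-shell decomposition $\mathbb{D} = \bigcup_k S(I_a^{(k)})\setminus S(I_a^{(k-1)})$ with $|I_a^{(k)}| = 2^k(1-|a|)$, using the local bound $(1-|\sigma_a(z)|^2) \approx (1-|z|^2)/|I_a^{(k)}|$ on each shell.

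I then select arcs $I_n$ with $|I_n| \to 0$ realizing this limsup and set $a_n = (1-|I_n|)\zeta_n$ with $\zeta_n$ the center of $I_n$. Testing the Morrey norm of $T_g F_{a_n}$ at the arc $I_n$ exactly as in the second half of the proof of Theorem~\ref{th2}, and using the pointwise estimate $|F_{a_n}(z)|^2 \approx |I_n|^{\lambda-1}$ on $S(I_n)$, I obtain
\begin{align*}
\|T_g F_{a_n}\|_{\mathcal{L}^{2,\lambda}}^2 &\gtrsim \frac{1}{|I_n|^\lambda}\int_{S(I_n)}|F_{a_n}(z)|^2|g'(z)|^2(1-|z|^2)\,dA(z) \\
&\gtrsim \frac{1}{|I_n|}\int_{S(I_n)}|g'(z)|^2(1-|z|^2)\,dA(z),
\end{align*}
and letting $n \to \infty$ together with the auxiliary characterization yields $\|T_g\|_e \gtrsim \dist(g, VMOA)$. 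The main obstacle I foresee is the auxiliary characterization itself: its proof requires matching the tail of the hyperbolic integral $\int_\mathbb{D}|g'|^2(1-|\sigma_a|^2)\,dA$ against the Carleson-box integrals $|I_a^{(k)}|^{-1}\int_{S(I_a^{(k)})}|g'|^2(1-|z|^2)\,dA$ via the dyadic shells, and showing that the tail contributions are geometrically summable so as to preserve the limsup.
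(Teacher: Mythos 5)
Your proposal matches the paper's proof essentially step for step: the same test functions $F_{b_n}$ concentrated on shrinking Carleson boxes (with the pointwise bound $|F_{b_n}|^2\approx|I_n|^{\lambda-1}$ on $S(I_n)$) for the lower bound, and the same compact approximants $T_{g_r}$ combined with Theorem~\ref{th2} and Lemma~\ref{le5} for the upper bound. The only divergence is that the paper cites the proof of Lemma 3.4 of \cite{SZ} for the equivalence $\limsup_{|a|\rightarrow 1}\|g\circ\sigma_{a}-g(a)\|_{H^{2}}\approx\limsup_{|I|\rightarrow 0}\bigl(|I|^{-1}\int_{S(I)}|g'(z)|^{2}(1-|z|^{2})\,dA(z)\bigr)^{1/2}$, whereas you propose to reprove it via the dyadic-shell decomposition, which is the standard (and correct) argument behind that citation.
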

\begin{proof}
Let $\{I_{n}\}$ be the subarc sequence of $\partial \mathbb{D}$ such
that $|I_{n}|\rightarrow 0$ as $n\rightarrow \infty$. Denote
$b_{n}=(1-|I_{n}|)\zeta_{n}\in \mathbb{D}$, where $\zeta_{n}\in
\partial \mathbb{D}$ is the center of arc $I_{n},\ n=1,2, \cdots$. We know that
$$(1-|b_{n}|^{2})\approx|1-\overline{b_{n}}z|\approx|I_{n}|,\ \ z\in S(I_{n}).$$
Choose the function
$F_{n}(z)=(1-|b_{n}|^{2})(1-\overline{b_{n}}z)^{\frac{\lambda-3}{2}},
z\in \mathbb{D}$. Then $F_{n}\rightarrow 0$ uniformly on the compact
subsets of $\mathbb{D}$ as $n\rightarrow \infty$ and
$\|F_{n}\|_{\mathcal{L}^{2, \lambda}}\lesssim 1$  by Lemma
$\ref{le4}$. Thus,
$$\lim_{n\rightarrow \infty} \|K F_{n}\|_{\mathcal{L}^{2,
\lambda}}\rightarrow 0$$ for any compact operator $K$ on $
\mathcal{L}^{2, \lambda}(\mathbb{D})$. Therefore
\begin{equation}\nonumber
\begin{split}
\|T_{g}-K\|&\gtrsim \limsup_{n\rightarrow
\infty}(\|T_{g}F_{n}\|_{\mathcal{L}^{2, \lambda}}-\|
KF_{n}\|_{\mathcal{L}^{2, \lambda}})
\\&=\limsup_{n\rightarrow
\infty}\|T_{g}F_{n}\|_{\mathcal{L}^{2, \lambda}}
\\&\gtrsim \limsup_{n\rightarrow
\infty} \Big(
\frac{1}{|I_{n}|^{\lambda}}\int_{S(I_{n})}|F_{n}(z)|^{2}|g'(z)|^{2}(1-|z|^{2})dA(z)
\Big)^{1/2}
\\&\approx \limsup_{n\rightarrow
\infty} \Big(
\frac{1}{|I_{n}|}\int_{S(I_{n})}|g'(z)|^{2}(1-|z|^{2})dA(z)
\Big)^{1/2}.
\end{split}
\end{equation}
Since the sequence $\{I_{n}\}$ is arbitrary, we obtain
$$\|T_{g}\|_{e}\gtrsim \limsup_{|I|\rightarrow 0} \Big( \frac{1}{|I|}\int_{S(I)}|g'(z)|^{2}(1-|z|^{2})dA(z)
\Big)^{1/2}.$$  It follows from the proof of Lemma 3.4 in \cite{SZ}
that, for $g\in BMOA$,
$$\limsup_{|a|\rightarrow 1}\|g\circ\sigma_{a}-g(a)\|_{H^{2}}\approx  \limsup_{|I|\rightarrow 0} \Big( \frac{1}{|I|}\int_{S(I)}|g'(z)|^{2}(1-|z|^{2})dA(z)
\Big)^{1/2}.$$ Hence
$$\|T_{g}\|_{e}\gtrsim \limsup_{|a|\rightarrow 1}\|g\circ\sigma_{a}-g(a)\|_{H^{2}}.$$

On the other hand,  $T_{g_{r}}:\mathcal{L}^{2,
\lambda}(\mathbb{D})\rightarrow \mathcal{L}^{2, \lambda}(\mathbb{D})
$ is a compact operator. Combining this with Theorem $\ref{th2}$
implies
$$\|T_{g}\|_{e}\leq \|T_{g}-T_{g_{r}}\|=\|T_{g-g_{r}}\|\approx\|g-g_{r}\|_{BMOA},$$
where we used the linearity of $T_{g}$ respect to $g$. Hence
$$\|T_{g}\|_{e}\lesssim \limsup_{r\rightarrow 1}\|g-g_{r}\|_{BMOA}\approx\limsup_{|a|\rightarrow 1}\|g\circ\sigma_{a}-g(a)\|_{H^{2}}$$
by Lemma $\ref{le5}$. The theorem is proved.
\end{proof}

\begin{remark}
For $\lambda=1$, $\mathcal{L}^{2,\lambda}(\mathbb{D})=BMOA$. Theorem
$\ref{th4}$ holds for $\lambda=1$, see Theorem 2.1 of \cite{LLX}.
But Theorem $\ref{th5}$ is not true for $\lambda=1$ (Theorem 2 of
\cite{JSP}).
\end{remark}

\begin{corollary}
Let $0<\lambda<1$ and $g\in H(\mathbb{D})$. Then
\newline (1)\ $I_{g}$
is compact on $\mathcal{L}^{2, \lambda}(\mathbb{D})$ if and only if
$g=0$; \newline (2)\ $T_{g}$ is compact on $\mathcal{L}^{2,
\lambda}(\mathbb{D})$ if and only if $g\in VMOA$.
\end{corollary}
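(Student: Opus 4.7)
The plan is to derive this corollary directly from the essential norm estimates established in Theorems \ref{th4} and \ref{th5}, exploiting the well-known fact that a bounded linear operator $T$ on a Banach space is compact if and only if $\|T\|_{e}=0$.

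For part (1), I would argue as follows. If $g=0$, then $I_{g}$ is the zero operator, which is trivially compact. Conversely, if $I_{g}$ is compact on $\mathcal{L}^{2,\lambda}(\mathbb{D})$, then it is bounded, and Theorem \ref{th1} gives $g\in H^{\infty}(\mathbb{D})$. Applying Theorem \ref{th4} then yields
$$0=\|I_{g}\|_{e}\approx \|g\|_{\infty},$$
so $g\equiv 0$ on $\mathbb{D}$.

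For part (2), the argument is analogous but uses that $VMOA$ is a closed subspace of $BMOA$. If $g\in VMOA\subset BMOA$, then $\dist(g,VMOA)=0$, so by Theorem \ref{th5} we have $\|T_{g}\|_{e}=0$ and hence $T_{g}$ is compact on $\mathcal{L}^{2,\lambda}(\mathbb{D})$. Conversely, if $T_{g}$ is compact, then it is bounded, so by Theorem \ref{th2} we obtain $g\in BMOA$, which allows us to invoke Theorem \ref{th5} to conclude
$$\dist(g,VMOA)\approx \|T_{g}\|_{e}=0.$$
Since $VMOA$ is closed in $BMOA$ (this is a standard fact used implicitly throughout the paper), the vanishing of $\dist(g,VMOA)$ forces $g\in VMOA$.

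There is essentially no obstacle: the only mild subtlety is remembering that one must first verify boundedness of the operator in each direction before quoting the essential norm estimates, and that $VMOA$ is closed in $BMOA$ so that zero distance gives membership. All the heavy lifting has already been done in Theorems \ref{th1}, \ref{th2}, \ref{th4}, and \ref{th5}; the corollary itself is a short formal consequence.
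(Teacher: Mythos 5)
Your proposal is correct and matches the paper's intent exactly: the paper states this corollary without proof as an immediate consequence of Theorems \ref{th4} and \ref{th5} together with the fact that compactness is equivalent to vanishing essential norm, which is precisely your argument. Your extra care about first establishing boundedness (via Theorems \ref{th1} and \ref{th2}) and the closedness of $VMOA$ in $BMOA$ is appropriate and fills in the details the paper leaves implicit.
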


Now we give the essential norm of $I_{g}:
H^{p}(\mathbb{D})\rightarrow
\mathcal{L}^{2,1-\frac{2}{p}}(\mathbb{D})$ ($2<p\leq \infty$) in the
following theorem.
\begin{theorem}\label{th7}
Let $2<p\leq \infty$ and $g\in H(\mathbb{D})$. If $I_{g}:
H^{p}(\mathbb{D})\rightarrow
\mathcal{L}^{2,1-\frac{2}{p}}(\mathbb{D})$ is bounded, then
$$\|I_{g}\|_{e}\approx \|g\|_{\infty}.$$
\end{theorem}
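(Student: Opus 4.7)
The proof will mirror the structure of Theorem \ref{th4} exactly, swapping in the $H^p$ test function from Theorem \ref{th31} in place of the $\mathcal{L}^{2,\lambda}$ test function $f_b$. The upper bound is immediate: since the compact operators form a subset over which we take an infimum,
\[
\|I_g\|_e \le \|I_g\| \lesssim \|g\|_\infty,
\]
where the last inequality is exactly Theorem \ref{th31}.

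For the lower bound, the plan is to exhibit a bounded sequence in $H^p(\mathbb{D})$ that tends to $0$ uniformly on compact subsets of $\mathbb{D}$ but whose $I_g$-images have $\mathcal{L}^{2,1-2/p}$-norm bounded below by $|g(b_n)|$ for a sequence $\{b_n\}$ with $|b_n|\to 1$ realizing $\|g\|_\infty$. First I would invoke the maximum modulus principle to pick $\{b_n\}\subset\mathbb{D}$ with $|b_n|\to 1$ and $|g(b_n)|\to \|g\|_\infty$. Then I would reuse the test function from the proof of Theorem \ref{th31},
\[
f_n(z) = \frac{(1-|b_n|^2)^{1-\frac{1}{p}}}{\overline{b_n}(1-\overline{b_n} z)}, \qquad z\in \mathbb{D}.
\]
The estimates in Theorem \ref{th31} already give $\|f_n\|_{H^p}\lesssim 1$; moreover, since $1-\tfrac1p>0$, the factor $(1-|b_n|^2)^{1-1/p}\to 0$ while $|1-\overline{b_n}z|$ stays bounded below on every compact subset of $\mathbb{D}$, so $f_n\to 0$ uniformly on compact subsets.

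Next, for any compact operator $K:H^p(\mathbb{D})\to\mathcal{L}^{2,1-2/p}(\mathbb{D})$, the standard fact that a norm-bounded sequence of analytic functions converging to $0$ uniformly on compacta is mapped to a sequence with $\|Kf_n\|_{\mathcal{L}^{2,1-2/p}}\to 0$ yields
\[
\|I_g-K\| \;\gtrsim\; \limsup_{n\to\infty}\|(I_g-K)f_n\|_{\mathcal{L}^{2,1-2/p}} \;\geq\; \limsup_{n\to\infty}\|I_g f_n\|_{\mathcal{L}^{2,1-2/p}}.
\]
Then the same chain of inequalities that appears in the lower-bound half of the proof of Theorem \ref{th31} (estimating the $\mathcal{L}^{2,1-2/p}$-norm below by choosing $a=b_n$ in the characterization via Lemma \ref{le1}(iii), applying the change of variables $w=\sigma_{b_n}(z)$, and using Lemma 4.12 of \cite{zhu}) produces
\[
\|I_g f_n\|_{\mathcal{L}^{2,1-2/p}} \;\gtrsim\; |g(b_n)|.
\]
Taking the limsup and then the infimum over compact $K$ gives $\|I_g\|_e\gtrsim \limsup_n |g(b_n)| = \|g\|_\infty$, completing the proof.

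The only point requiring care is the claim that $\|Kf_n\|_{\mathcal{L}^{2,1-2/p}}\to 0$. For $2<p<\infty$, $H^p$ is reflexive and the bounded sequence $\{f_n\}$ converges weakly to $0$ (since it converges pointwise and is norm-bounded), so the statement follows from the definition of compactness. For $p=\infty$ one must instead use the standard fact (via extracting a convergent subsequence of $\{Kf_n\}$ and identifying its pointwise limit as $0$, which works because point evaluation is bounded on $\mathcal{L}^{2,1-2/p}$ by Lemma \ref{le2}) that a compact operator on a Banach space of analytic functions with continuous point evaluations sends bounded, uniformly-on-compacta null sequences to norm-null sequences. This is exactly the same subtlety that had to be handled implicitly in Theorem \ref{th4}, and I would dispatch it in one short remark.
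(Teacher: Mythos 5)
Your proposal is correct and follows essentially the same route as the paper: the same upper bound via Theorem \ref{th31}, the same test functions $f_{n}$, the same lower bound $\|I_{g}f_{n}\|_{\mathcal{L}^{2,1-\frac{2}{p}}}\gtrsim |g(b_{n})|$, and the same conclusion via the arbitrariness of $\{b_{n}\}$. Your extra remark justifying $\|Kf_{n}\|_{\mathcal{L}^{2,1-\frac{2}{p}}}\rightarrow 0$ is a welcome addition that the paper leaves implicit.
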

\begin{proof}
The proof is similar to that of Theorem $\ref{th31}$. Choose the
sequence $\{b_{n}\}\subset \mathbb{D}$ with $|b_{n}|\geq 1/2$ such
that $|b_{n}|\rightarrow 1$ as $n\rightarrow \infty$. Consider the
sequence of functions
$$f_{n}(z)=\frac{(1-|b_{n}|^{2})^{1-\frac{1}{p}}}{\overline{b_{n}}(1-\overline{b_{n}}z)}.$$
It is easy to see that $\|f_{n}\|_{H^{p}}\lesssim 1$ and $f_{n}$
converges to zero uniformly on compact subsets of $\mathbb{D}$. Then
$\|Kf_{n}\|_{\mathcal{L}^{2,1-\frac{2}{p}}}\rightarrow 0$ as
$n\rightarrow \infty$ for any compact operator $K:
H^{p}(\mathbb{D})\rightarrow
\mathcal{L}^{2,1-\frac{2}{p}}(\mathbb{D})$. Since
\begin{equation}\nonumber
\begin{split}
\|I_{g}-K\|&\gtrsim\limsup_{n\rightarrow
\infty}\|(I_{g}-K)f_{n}\|_{\mathcal{L}^{2,1-\frac{2}{p}}}\\&\geq
\limsup_{n\rightarrow
\infty}(\|I_{g}f_{n}\|_{\mathcal{L}^{2,1-\frac{2}{p}}}-\|Kf_{n}\|_{\mathcal{L}^{2,1-\frac{2}{p}}})\\&\geq
\limsup_{n\rightarrow
\infty}\|I_{g}f_{n}\|_{\mathcal{L}^{2,1-\frac{2}{p}}}
\end{split}
\end{equation}
and
\begin{equation}\nonumber
\begin{split}
\|I_{g}f_{n}\|_{\mathcal{L}^{2,1-\frac{2}{p}}}&\approx \sup_{a\in
\mathbb{D}}\Big((1-|a|^{2})^{\frac{2}{p}}\int_{\mathbb{D}}|f_{n}'(z)|^{2}|g(z)|^{2}(1-|\sigma_{a}(z)|^{2})dA(z)
\Big)^{1/2}\\&\geq\Big(\int_{\mathbb{D}}
|\sigma_{b_{n}}'(z)|^{2}|g(z)|^{2}(1-|\sigma_{b_{n}}(z)|^{2})dA(z)\Big)^{1/2}
\\&=\Big(\int_{\mathbb{D}}
|g(\sigma_{b_{n}}(w))|^{2}(1-|w|^{2})dA(w)\Big)^{1/2}
\\&\gtrsim |g(b_{n})|.
\end{split}
\end{equation}
We have
$$\|I_{g}\|_{e}\gtrsim \limsup_{n\rightarrow \infty}|g(b_{n})|.$$
The arbitrary choice of the sequence $\{b_{n}\}$ implies
$\|I_{g}\|_{e}\gtrsim \|g\|_{\infty}$.

On the other hand, for compact operators $K$, it follows from
Theorem $\ref{th31}$ that
$$\|I_{g}\|_{e}=\inf_{K}\|I_{g}-K\|\leq \|I_{g}\|\approx\|g\|_{\infty}.$$
The theorem is proved.
\end{proof}

As another main result of this section, we next estimate essential
norm of $T_{g}:\ H^{p}(\mathbb{D})\rightarrow
\mathcal{L}^{2,1-\frac{2}{p}}(\mathbb{D})\ (2<p\leq \infty)$, for
its proof, we need the following lemma.

\begin{lemma}\label{le7}
Let $2<p\leq \infty$ and $g\in BMOA$. Then $T_{g_{r}}:\
H^{p}(\mathbb{D})\rightarrow
\mathcal{L}^{2,1-\frac{2}{p}}(\mathbb{D})$ is compact.
\end{lemma}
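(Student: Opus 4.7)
The plan is to mirror the proof of Lemma~\ref{le6}, replacing the Morrey-space growth estimate of Lemma~\ref{le2} with the standard $H^p$ growth estimate. Let $\{f_n\}$ be a sequence in $H^p(\mathbb{D})$ with $\|f_n\|_{H^p}\le 1$ and $f_n\to 0$ uniformly on compact subsets of $\mathbb{D}$. It suffices to show that $\|T_{g_r}f_n\|_{\mathcal{L}^{2,1-\frac{2}{p}}}\to 0$.

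First, I will expand the Morrey norm using Lemma~\ref{le1}(iii) with $\lambda=1-\frac{2}{p}$ together with the identity $1-|\sigma_a(z)|^2=(1-|a|^2)(1-|z|^2)/|1-\bar{a}z|^2$, obtaining
\[
\|T_{g_r}f_n\|_{\mathcal{L}^{2,1-\frac{2}{p}}}^{2}\lesssim \sup_{a\in\mathbb{D}}\int_{\mathbb{D}} |f_n(z)|^{2}|g_r'(z)|^{2}\frac{(1-|a|^2)^{1+\frac{2}{p}}(1-|z|^2)}{|1-\bar{a}z|^{2}}\,dA(z).
\]
Since $g_r(z)=g(rz)$ extends analytically across $\partial\mathbb{D}$, the factor $|g_r'(z)|$ is uniformly bounded on $\mathbb{D}$ by a constant $C(r,g)$, exactly as in the proof of Lemma~\ref{le6}.

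Second, using the elementary bound $|1-\bar{a}z|\ge\max(1-|a|,1-|z|)$, which yields
\[
|1-\bar{a}z|^{2}\ge (1-|a|)^{1+\frac{2}{p}}(1-|z|)^{1-\frac{2}{p}},
\]
I obtain the $a$-uniform pointwise estimate
\[
\frac{(1-|a|^2)^{1+\frac{2}{p}}(1-|z|^2)}{|1-\bar{a}z|^{2}}\lesssim (1-|z|^2)^{\frac{2}{p}},
\]
so the supremum in $a$ drops out and
\[
\|T_{g_r}f_n\|_{\mathcal{L}^{2,1-\frac{2}{p}}}^{2}\lesssim C(r,g)^{2}\int_{\mathbb{D}}|f_n(z)|^{2}(1-|z|^2)^{\frac{2}{p}}\,dA(z).
\]

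Third, by the standard Hardy-space growth bound $|f_n(z)|\le \|f_n\|_{H^p}/(1-|z|^2)^{1/p}\le (1-|z|^2)^{-1/p}$ (Theorem~9.1 of \cite{zhu}), the integrand above is dominated by the constant function $1$, which is integrable on $\mathbb{D}$. Since $f_n(z)\to 0$ pointwise, the Dominated Convergence Theorem gives the desired conclusion.

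No step presents a real obstacle; the point that merits a line of verification is the clean matching of exponents, namely that the $H^p$ pointwise bound contributes $(1-|z|^2)^{-2/p}$ for $|f_n|^2$, while the Morrey exponent $\lambda=1-\frac{2}{p}$ produces the companion factor $(1-|z|^2)^{1-\lambda}=(1-|z|^2)^{2/p}$, so that the two exactly balance to yield a bounded, integrable dominant.
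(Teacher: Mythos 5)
Your proof is correct and follows essentially the same route as the paper's: the same test-sequence criterion for compactness, the same expansion of the Morrey norm via Lemma~\ref{le1}(iii), the same reduction of the supremum over $a$ to the single integral $\int_{\mathbb{D}}|f_n(z)|^{2}(1-|z|^{2})^{2/p}\,dA(z)$, and the same conclusion by the $H^p$ growth bound plus dominated convergence. The only cosmetic differences are that you bound $|g_r'|$ by a constant $C(r,g)$ via analytic continuation across $\partial\mathbb{D}$ whereas the paper uses $|g_r'(z)|\lesssim \|g\|_{BMOA}/(1-r^{2})$, and that you spell out the kernel estimate $|1-\overline{a}z|^{2}\ge (1-|a|)^{1+\frac{2}{p}}(1-|z|)^{1-\frac{2}{p}}$ that the paper leaves implicit; both are equally valid.
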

\begin{proof}
The proof is similar to that of Lemma 6.  Let $\{f_{n}\}\subset
H^{p}(\mathbb{D})$ be such that $\|f_{n}\|_{H^{p}}\leq 1$ and
$f_{n}\rightarrow 0$ uniformly on compact subsets of $\mathbb{D}$ as
$n\rightarrow \infty$. We have
\begin{equation}\nonumber
\begin{split}
\|T_{g_{r}}f_{n}\|_{\mathcal{L}^{2,1-\frac{2}{p}}}&\lesssim
\sup_{a\in
\mathbb{D}}\Big((1-|a|^{2})^{\frac{2}{p}}\int_{\mathbb{D}}|f_{n}(z)|^{2}|g_{r}'(z)|^{2}(1-|\sigma_{a}|^{2})dA(z)
\Big)^{1/2}
\\&\lesssim\frac{\|g\|_{BMOA}}{1-r^{2}}\sup_{a\in \mathbb{D}}
\Big(\int_{\mathbb{D}}|f_{n}(z)|^{2}\frac{(1-|z|^{2})(1-|a|^{2})^{1+\frac{2}{p}}}{|1-\overline{a}z|^{2}}dA(z)
\Big)^{1/2}
\\&\lesssim \frac{\|g\|_{BMOA}}{1-r^{2}}
\Big(\int_{\mathbb{D}}|f_{n}(z)|^{2}(1-|z|^{2})^{\frac{2}{p}}dA(z)\Big)^{1/2}
\end{split}
\end{equation}
and $|f_{n}(z)|(1-|z|^{2})^{\frac{2}{p}}\lesssim 1$ by Theorem 9.1
of \cite{zhu}. Using the Dominated Convergence Theorem again implies
$\lim\limits_{n\rightarrow
\infty}\|T_{g_{r}}f_{n}\|_{\mathcal{L}^{2,1-\frac{2}{p}}}=0$.
\end{proof}

\begin{theorem}\label{th6}
Suppose $2<p\leq \infty$ and $g\in BMOA$. Then $T_{g}:
H^{p}(\mathbb{D})\rightarrow
\mathcal{L}^{2,1-\frac{2}{p}}(\mathbb{D})$ satisfies
$$\|T_{g}\|_{e}\approx \dist(g,VMOA)\approx \limsup_{|a|\rightarrow 1}\|g\circ \sigma_{a}-g(a)\|_{H^{2}}.$$
\end{theorem}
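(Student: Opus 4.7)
The plan is to mirror the strategy of Theorem \ref{th5}, substituting the $H^{p}$--test functions used in Theorems \ref{th31} and \ref{th3} for the Morrey--space test functions. The second equivalence $\dist(g,VMOA)\approx \limsup_{|a|\to 1}\|g\circ\sigma_{a}-g(a)\|_{H^{2}}$ is already recorded as Lemma \ref{le5}, so the real work is the $\|T_{g}\|_{e}\approx \dist(g,VMOA)$ estimate. I will establish matching lower and upper bounds.

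For the lower bound I will take a sequence of arcs $\{I_{n}\}\subset\partial\mathbb{D}$ with $|I_{n}|\to 0$ and centers $\zeta_{n}$, set $b_{n}=(1-|I_{n}|)\zeta_{n}$, and use the test functions
$$h_{n}(z)=\frac{1-|b_{n}|^{2}}{(1-\overline{b_{n}}z)^{1+\frac{1}{p}}}$$
that already appear in the proof of Theorem \ref{th3}. These satisfy $\|h_{n}\|_{H^{p}}\lesssim 1$ and $h_{n}\to 0$ uniformly on compact subsets of $\mathbb{D}$, so $\|Kh_{n}\|_{\mathcal{L}^{2,1-2/p}}\to 0$ for every compact $K\colon H^{p}\to\mathcal{L}^{2,1-2/p}$. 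Therefore
$$\|T_{g}-K\|\gtrsim \limsup_{n\to\infty}\|T_{g}h_{n}\|_{\mathcal{L}^{2,1-2/p}}.$$
Restricting the Morrey sup to the arc $I_{n}$ and using $|h_{n}(z)|\approx |I_{n}|^{-1/p}$ on $S(I_{n})$ yields
$$\|T_{g}h_{n}\|_{\mathcal{L}^{2,1-2/p}}^{2}\gtrsim \frac{1}{|I_{n}|}\int_{S(I_{n})}|g'(z)|^{2}(1-|z|^{2})dA(z).$$
Taking the supremum over all sequences of shrinking arcs and invoking the computation from the proof of Lemma 3.4 in \cite{SZ} (as already cited in Theorem \ref{th5}) then gives
$$\|T_{g}\|_{e}\gtrsim \limsup_{|a|\to 1}\|g\circ\sigma_{a}-g(a)\|_{H^{2}}.$$

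For the upper bound I will approximate $T_{g}$ by $T_{g_{r}}$. By Lemma \ref{le7} the operator $T_{g_{r}}\colon H^{p}\to \mathcal{L}^{2,1-2/p}$ is compact for every $0<r<1$, and by linearity in $g$ together with the norm estimate of Theorem \ref{th3},
$$\|T_{g}\|_{e}\leq \|T_{g}-T_{g_{r}}\|=\|T_{g-g_{r}}\|\lesssim \|g-g_{r}\|_{BMOA}.$$
Letting $r\to 1$ and applying Lemma \ref{le5} gives
$$\|T_{g}\|_{e}\lesssim \limsup_{r\to 1}\|g-g_{r}\|_{BMOA}\approx \dist(g,VMOA)\approx \limsup_{|a|\to 1}\|g\circ\sigma_{a}-g(a)\|_{H^{2}},$$
closing the chain of equivalences.

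Nothing in the argument requires a new idea; the only mildly delicate point is the lower bound computation, where one must extract the factor $|I_{n}|^{-2/p}$ from $|h_{n}|^{2}$ precisely enough to cancel the weight $|I_{n}|^{-(1-2/p)}$ appearing in the Morrey seminorm and produce the classical Carleson--box expression $\frac{1}{|I_{n}|}\int_{S(I_{n})}|g'(z)|^{2}(1-|z|^{2})dA(z)$. This is the step I expect to be the main obstacle, but it is essentially the same manipulation already carried out in the first half of Theorem \ref{th3}, so it should go through unchanged. The endpoint $p=\infty$ presents no difficulty since then $h_{n}(z)=(1-|b_{n}|^{2})/(1-\overline{b_{n}}z)$ is a standard bounded test function on $\mathbb{D}$.
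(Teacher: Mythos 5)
Your proposal is correct and follows essentially the same route as the paper's own proof: the lower bound via the test functions $h_{n}(z)=(1-|b_{n}|^{2})(1-\overline{b_{n}}z)^{-(1+\frac{1}{p})}$ restricted to the Carleson boxes $S(I_{n})$ together with the identity from Lemma 3.4 of \cite{SZ}, and the upper bound via compactness of $T_{g_{r}}$ (Lemma \ref{le7}), the norm estimate of Theorem \ref{th3}, and Lemma \ref{le5}. No substantive difference.
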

\begin{proof}
 As in the proof of Theorem $\ref{th5}$. Let $\{I_{n}\}$ be the subarc sequence of $\partial \mathbb{D}$
 such that $|I_{n}|\rightarrow 0$ as $n\rightarrow \infty$,
 $b_{n}=(1-|I_{n}|\zeta_{n})\in \mathbb{D}$ and  $\zeta_{n}\in \partial
 \mathbb{D}$ be the center of arc $I_{n}$.
 Consider the function
 $$h_{n}(z)=\frac{1-|b_{n}|^{2}}{(1-\overline{b_{n}}z)^{1+\frac{1}{p}}}.$$
 It is easy to check that  $h_{n}\rightarrow 0$
 uniformly on compact subsets of $\mathbb{D}$ as $n\rightarrow
 \infty$ and $\|h_{n}\|_{H^{p}}\lesssim 1$. For any compact operator $K: H^{p}(\mathbb{D})\rightarrow
\mathcal{L}^{2,1-\frac{2}{p}}(\mathbb{D})$, we have
$$\lim_{n\rightarrow \infty}\|Kh_{n}\|_{ \mathcal{L}^{2,1-\frac{2}{p}}} \rightarrow 0.$$
So,
\begin{equation}\nonumber
\begin{split}
\|T_{g}-K\|&\gtrsim \limsup_{n\rightarrow \infty}(\|T_{g}h_{n}\|_{
\mathcal{L}^{2,1-\frac{2}{p}}}-\|Kh_{n}\|_{
\mathcal{L}^{2,1-\frac{2}{p}}})
\\&=\limsup_{n\rightarrow \infty}\|T_{g}h_{n}\|_{
\mathcal{L}^{2,1-\frac{2}{p}}}
\\&\gtrsim \limsup_{n\rightarrow \infty}\Big(\frac{1}{|I_{n}|^{1-\frac{2}{p}}}\int_{S(I_{n})}|(T_{g}h_{n})'(z)|^{2}(1-|z|^{2})dA(z)\Big)^{1/2}
\\&\approx\limsup_{n\rightarrow \infty}\Big(\frac{1}{|I_{n}|^{1-\frac{2}{p}}}\int_{S(I_{n})}|h_{n}(z)|^{2}|g'(z)|^{2}(1-|z|^{2})dA(z)\Big)^{1/2}
\\&\gtrsim\limsup_{n\rightarrow
\infty}\Big(\frac{1}{|I_{n}|}\int_{S(I_{n})}|g'(z)|^{2}(1-|z|^{2})dA(z)\Big)^{1/2}.
\end{split}
\end{equation}
The arbitrary choice of $\{I_{n}\}$ yields,
\begin{equation}\nonumber
\begin{split}
 \|T_{g}\|_{e}&\gtrsim \limsup_{|I|\rightarrow
0}\Big(\frac{1}{|I|}\int_{S(I)}|g'(z)|^{2}(1-|z|^{2})dA(z)\Big)^{1/2}
\\&\approx \limsup_{|a|\rightarrow 1}\|g\circ
\sigma_{a}-g(a)\|_{H^{2}}.
\end{split}
\end{equation}

On the other hand, it follows from Lemma $\ref{le7}$ that
$T_{g_{r}}: H^{p}(\mathbb{D})\rightarrow
\mathcal{L}^{2,1-\frac{2}{p}}(\mathbb{D})$ is a compact operator.
Applying Theorem $\ref{th3}$ gives
$$\|T_{g}\|_{e}\leq \|T_{g}-T_{g_{r}}\|=\|T_{g-g_{r}}\|\approx \|g-g_{r}\|_{BMOA}.$$
So
$$\|T_{g}\|_{e}\lesssim \limsup_{r\rightarrow 1}\|g-g_{r}\|_{BMOA}\approx \limsup_{|a|\rightarrow 1}\|g\circ \sigma_{a}-g(a)\|_{H^{2}}$$
by Lemma $\ref{le5}$. The proof of Theorem $\ref{th6}$ is finished.
\end{proof}

\begin{remark}
The proof of Theorem $\ref{th7}$ shows that Theorem $\ref{th7}$ is
true for $p=2$. Theorem $\ref{th6}$ holds also for $p=2$ ( Theorem 1
of \cite{JSP}).
\end{remark}

\begin{corollary}
(\cite[Theorem 9]{WZ})  Let $2\leq p\leq \infty$ and $g\in
H(\mathbb{D})$. Then
\newline (1)\ $I_{g}:\ H^{p}(\mathbb{D})\rightarrow
\mathcal{L}^{2,1-\frac{2}{p}}(\mathbb{D})$ is compact if and only if
$g=0$,
\newline(2)\ $T_{g}: H^{p}(\mathbb{D})\rightarrow
\mathcal{L}^{2,1-\frac{2}{p}}(\mathbb{D})$ is compact if and only if
$g\in VMOA$.
\end{corollary}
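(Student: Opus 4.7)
The plan is to derive both parts of the corollary directly from the essential-norm estimates just proved, namely Theorem \ref{th7} and Theorem \ref{th6}, together with the general principle that for a bounded linear operator $T$ between Banach spaces, $T$ is compact if and only if $\|T\|_e = 0$. The case $p=2$ is handled by the remark immediately preceding, which tells us that Theorems \ref{th7} and \ref{th6} extend to $p=2$; so throughout we may treat $2\le p\le\infty$ uniformly.

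For part (1), I would first note that if $g = 0$ then $I_g$ is the zero operator, hence trivially compact. For the converse, suppose $I_g \colon H^p(\mathbb{D}) \to \mathcal{L}^{2,1-\frac{2}{p}}(\mathbb{D})$ is compact. Then $\|I_g\|_e = 0$. But Theorem \ref{th7} (valid for all $2 \le p \le \infty$ by the preceding remark) asserts $\|I_g\|_e \approx \|g\|_\infty$, so $\|g\|_\infty = 0$, which forces $g \equiv 0$.

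For part (2), the ``if'' direction is immediate: if $g \in VMOA$, then $\dist(g,VMOA)=0$, and Theorem \ref{th6} yields $\|T_g\|_e = 0$, so $T_g$ is compact. For the ``only if'' direction, assume $T_g$ is compact. Then $g \in BMOA$ (by the boundedness characterization from Theorem 9 of \cite{WZ}, which is needed as a hypothesis of Theorem \ref{th6}), and $\|T_g\|_e = 0$. Theorem \ref{th6} gives $\dist(g,VMOA) \approx \|T_g\|_e = 0$. Since $VMOA$ is a closed subspace of $BMOA$, vanishing distance forces $g \in VMOA$.

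There is no real obstacle here: the corollary is a direct translation of the quantitative essential-norm estimates into the qualitative compactness statements, plus the closedness of $VMOA$ inside $BMOA$. The only small point worth flagging is the need to invoke the boundedness of $T_g$ (equivalently $g \in BMOA$) as the standing hypothesis before applying Theorem \ref{th6}, since the essential norm is only defined for bounded operators; compactness of $T_g$ already implies its boundedness, so this causes no difficulty.
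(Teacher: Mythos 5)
Your proposal is correct and is exactly the argument the paper intends: the corollary is stated without proof as an immediate consequence of Theorems \ref{th7} and \ref{th6} via the equivalence of compactness with vanishing essential norm, together with the closedness of $VMOA$ in $BMOA$. Your remark that compactness already supplies the boundedness hypothesis needed to invoke the essential-norm theorems is the only point requiring care, and you handle it properly.
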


\end{document}